\newtheorem{assumption}{Assumption}
\newtheorem{observation}{Observation}
\newtheorem{lemma}{Lemma}
\newtheorem{definition}{Definition}
\newtheorem{theorem}{Theorem}
\newtheorem{remark}{Remark}
\newtheorem{corollary}{Corollary}
\begin{document}

\title{A Differential Game Approach to Decentralized Virus-Resistant Weight Adaptation Policy over Complex Networks}
%
%
%

\author{Yunhan~Huang,~\IEEEmembership{}
        Quanyan~Zhu,~\IEEEmembership{Member,~IEEE}
\thanks{Y. Huang and Q. Zhu are both with the Department of Electrical and Computer Engineering, New York University, Brookly, NY, USA; e-mail: \{yh2315,qz494\}@nyu.edu.}
}

\maketitle
\begin{abstract}
Increasing connectivity of communication networks enables large-scale distributed processing over networks and improves the efficiency for information exchange. However, malware and virus can take advantage of the high connectivity to spread over the network and take control of devices and servers for illicit purposes. In this paper, we use an SIS epidemic model to capture the virus spreading process and develop a virus-resistant weight adaptation scheme to mitigate the spreading over the network. We propose a differential game framework to provide a theoretic underpinning for decentralized mitigation in which nodes of the network cannot fully coordinate, and each node determines its own control policy based on local interactions with neighboring nodes. We characterize and examine the structure of the Nash equilibrium, and discuss the inefficiency of the Nash equilibrium in terms of minimizing the total cost of the whole network. A mechanism design through a penalty scheme is proposed to reduce the inefficiency of the Nash equilibrium and allow the decentralized policy to achieve social welfare for the whole network. We corroborate our results using numerical experiments and show that virus-resistance can be achieved by a distributed weight adaptation scheme.
\end{abstract}

\begin{IEEEkeywords}
Virus Resistance, Malware Spreading, Differential Game, Complex Networks, Decentralized Control, Mechanism Design, Network Security, Epidemic Processes.
\end{IEEEkeywords}

\IEEEpeerreviewmaketitle

\section{Introduction}\label{Introduction}
\IEEEPARstart{T}{he} integration of the information and communications technologies into systems upgrades system performance. However, the integration also degrades the security level of the systems and introduces vulnerabilities that undermine the reliability of critical infrastructure. The connectivity and interdependence of cyber networks make the system even more vulnerable due to the existence of the wide-spreading cyber-attacks on networks. It provides opportunities for the sophisticated and stealthy malware and virus to spread over the network. One noteworthy example is the StuxNet attack \cite{Farwell2011}. In June 2010, certain control systems of a nuclear-enrichment plant in Iran were infected by a carefully crafted computer worm called StuxNet. The worm, spreading through USB devices, intended to breach the implemented cyberprotection schemes and alter both the measurement and actuation signals which caused instabilities and damage the physical plant\cite{Pasqualetti2015}. More recent examples of wide-spreading cyber-attacks include WannaCry and Petya Ransomware, which have incurred billions of dollars of losses \cite{Hayel2017}.

\begin{figure}\centering
\vspace{-9mm}
\includegraphics[width=0.45\textwidth]{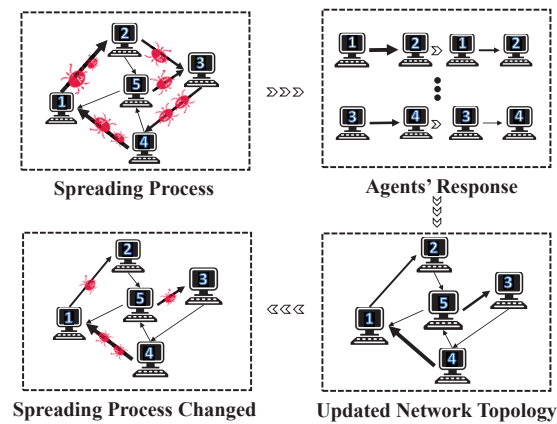}
\caption{The interactions between the spreading, topology and individual behavior. The agents adapt their behavior of contacting other agents due to virus spreading. The agents behavior changes the topology of the network which will in in turn affects the spreading process. For example, after the topology change, the virus is less likely to spread from device $1$ to device $2$.} \label{TriInter}
\end{figure}

With an increasing number of wide-spreading cyber-attacks on networks, protection against malware and virus spreading in cyber networks is central to the security of network systems \cite{Hayel2017}. However, there are many challenges on designing a protection scheme for cyber networks. One challenge is due to the interderdependency between the microscopic individual behaviors and the macroscopic spreading phenomenon.  The local interactions over a large network where nodes communicate, share information, and make interdependent decisions, can result in a macroscopic behavior, which will in turn affect the agents' behaviors. This type of microscopic and macroscopic couplings has been illustrated in Fig. \ref{TriInter}. Another challenge arises from the fact that cyber networks are often formed by a large number of self-interested agents or decision-makers. The noncooperation among the agents makes it almost impossible for the system to be coordinated as a whole to defend against wide-spreading cyber-attacks.

To this end, one way to mitigate the malware spreading over large networks is to control the intensity of interactions with neighboring nodes. By adapting the rate of communications or contacts, nodes can reduce the likelihood of infection. This type of mechanism is called weight adaptation as the weights between two nodes of a network capture the intensity of the connectivity \cite{Guo2013}. The most fundamental reason that virus and malware can go viral is the inherent property of networks: connectivity. Weight adaptation is a mechanism that hits the nail. Weight adaptation lowers the connectivity which leaves virus and malware no way out. Compared with quarantining and link removal \cite{Khouzani2012}, weight adaptation does not need to completely disconnect nodes from others but rather adjust weights to connect more loosely with nodes with a higher likelihood of infection. Instead of fixing the weights for the whole spreading process, in the weight adapation scheme, each agent dynamically updates their weight in response to the state of the neighboring nodes. {Weight adaptation is different from changing the infection rate. The infection rate is usually considered to be decided by some interior factors like physiological or immunological states of individual. The weight between two nodes is usually used to describe how strongly two nodes are connected. Changing the weight can be interpreted as an exterior change.}  


We consider a directed weighted network where the nodes and the edges represent the agents and the connections between the agents respectively. The directed connection between two nodes can be considered as one agent acquiring information/data/packet from another agent. The weight between two agents quantifies the frequency or the volume of communication between two agents \cite{Quanyan2012Comm}. The original weight is pre-designed by multilateral agreement among agents to achieve certain goals or to optimize the system performance when there is no infection. For example, in distributed estimation or learning problems over networks such as \cite{Mai2016,Zhang2017,Quanyan2012Comm}, one agent needs to communicate with its neighboring agents at a sufficient rate to find the global estimate of the state. The optimal weighting on the edges quantitatively captures the minimum required frequency of contacting neighboring nodes. As illustrated in Fig. \ref{AdapScheme}, when there are wide-spreading virus or cyber attacks, the agent can decrease the likelihood of being infected by reducing their weight with infected neighbors. The agent then restores the connections when the infected neighbors are recovered. Deviation of the weights from the optimal ones introduces cost induced by performance degradation and system inefficiency. 
Infected agents may not function normally. The agents and the network system will suffer losses. Thus, it is essential to consider the trade-off between malfunction cost caused by infection and inefficiency or performance degradation cost caused by weight deviation.

\begin{figure}\centering
\includegraphics[width=0.48\textwidth]{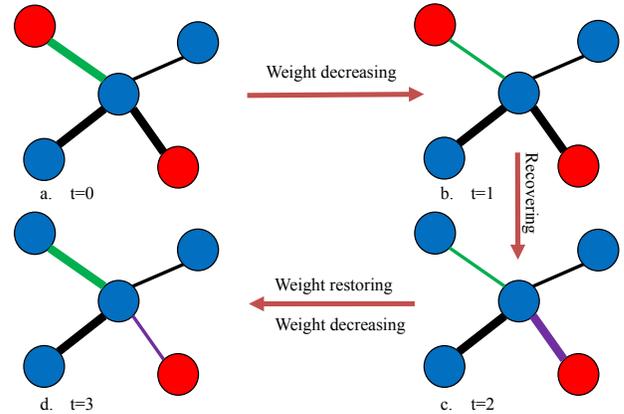}
\caption{Weight adaptation scheme to mitigate the infection for the node in the center over an undirected network. The line width indicates the weight. Red nodes are the infected ones while blue nodes are the susceptible ones. The susceptible node decreases the weight of its connection to an infected node. {Once the infected node is recovered, the weight of the connection to the recovered node will be restored as it is shown by process $c$ to $d$. The purple link and the green link are the ones who adapt the weights.}} \label{AdapScheme}
\end{figure}

In this paper, an $N$-person nonzero-sum differential game-based model is proposed to model the virus spreading and the agents' adaptive response to virus infection. This model captures the non-cooperative behaviors among agents, dynamic properties of spreading process, and the complexity of the local interactions. We characterize the Nash equilibrium (NE) for the game and investigate the network effects under the non-cooperative strategies. We observe that under the open-loop NE, each agent updates his weight based on its own infection level and its out-neighbors infection level as well as the corresponding component of its costate. When the agent's own infection level is high, it does not care much about the weight of links to infected out-neighbors. When its out-neighbor's infection level is high, it lowers more weight of the corresponding connection. The corresponding component of each agent's costate encodes the information about the network structure and the infection of the whole network.

We use a centralized optimal control problem to serve as a benchmark problem to study the efficiency of the decentralized problem. Under centralized policies, the system operator develops optimal weight adaptation scheme to achieve social optimum. Compared with the centralized solution, the open-loop NE solution is not the best from a system point of view since in the game, agents consider only their own cost. Such inefficiency caused by selfish behavior of agents has a significant impact on network and service management. One example is the congestion in traffic network caused by selfish drivers \cite{Xuehe2016}. To address the inefficiency, we propose a dynamic penalty approach by designing a mechanism in which each agent pays for the infection cost of all agents that are reachable to him/her. We show that with this mechanism, the open-loop NE policy achieves the social optimum. 

The equilibrium analysis and the mechanism design lead to a distributed algorithm for the network operator and the agents to compute the optimal weight adaptation where each agent only has to know local information. 
We summarize the principal contributions as follows:
\begin{enumerate}
\item We propose a differential game model to develop a virus-resistant weight adaptation scheme for cyber networks formed by a group of self-interested agents.
\item {We study the structure of the open-loop NE for the differential game over complex networks} and show the weight adaptation rule is based on the agents' and its out-neighbors' infection level as well as the costate.
\item We discuss the inefficiency of the NE. A dynamic penalty scheme is proposed to achieve social optimum for the whole network.
\item An implementable distributed virus resistance algorithm is proposed to compute the NE-based control policy.
\end{enumerate}

Game theory has long been a useful tool to design strategies on network systems for virus resistance purposes \cite{Hayel2017,Trajanovski2017,Hayel2017TIFS,Hayel2014}. In \cite{Trajanovski2017}, the authors have proposed a network formation game that balances multiple partially conflicting objectives such as the cost of installing links, the performance of the network and the resistance to virus. In their work, an undirected unweighted static network is formed. Hayel et.al. in \cite{Hayel2017,Hayel2017TIFS} have studied large population game with heterogeneous types of individuals. They focus on group behavior of certain type in stead of individual behavior. Besides game theory, other tools such as impulse control\cite{Shulgin1998}, optimal control\cite{Ping2018}, and optimization \cite{Preciado2014} have been used to design strategies to mitigate malware attacks and virus spreading.

Virus spreading over adaptive networks has first been studied by Gross et.al. in \cite{Gross2006}. They investigated adaptive behavior in a homogeneous way where the whole network takes the same adaption. Based on the work on epidemic spreading over time-varying networks \cite{Pare2017}, optimal control method has been utilized to find the optimal time-varying topology response for the network system in \cite{Ping2018}. However, the centralized optimal control method is not practical and lack of incentive. The effect of heterogeneous weight adaptation on virus spreading has been studied by Yun et al. in \cite{Yun2016,Yunhan2016}. In \cite{Yun2016}, the authors have proposed a weight adaptation rule without taking cost into consideration. {The weight adaptation rule is based on the infection level of the whole network.}

Vaccination and immunity have been studying for control of virus spreading over decades \cite{Shulgin1998,Preciado2014,Yunhan2016PA}. But vaccination may not be efficient for some malware and virus due to their fast upgrading and undetectability. Also, {getting every individual vaccinated is costly.} Quarantining \cite{Khouzani2012} is equivalent to removal of all connections of one agent. Compared with weight adaptation scheme, it is overreacting to disconnect all links since connection with healthy agents cause no harm. 

The paper is organized as follows. In Sect. \ref{MoForm}, preliminaries are given and the $N$-person nonzero-sum differential game framework is introduced. Section \ref{AnaRes} describes the open-loop NE of the differential game and the weight adaptation scheme. Sect. \ref{EffCom} studies the efficiency of the NE solution. Comparisons of the differential game-based weight adaptation scheme with the optimal control based scheme and other numerical results are given in Sect. {\ref{NumericalStudy}}. Conclusions are contained in Sect. \ref{Conc}.

\section{Preliminaries and Problem Formulation}\label{MoForm}
In this section, we introduce notations and preliminary results needed in our
derivations. Along the way, we describe and develop the problem formulation. 

\subsection{Graph Theory}
A weighted, directed graph can be defined by a triple $\mathcal{G} \triangleq (\mathcal{V},\mathcal{E},\mathcal{W})$. $\mathcal{V}\triangleq \{v_1,v_2,...,v_N \}$ represents a set of $N$ nodes. Define $\mathcal{N}\triangleq \{1,...,N\}$. A set of directed edges is denoted by $\mathcal{E}\subseteq\mathcal{V}\times \mathcal{V}$. The set of in-neighbors of node $i$ is defined as $\mathcal{N}_{i}^{in}\triangleq \{j|j\in \mathcal{V},(j,i)\in \mathcal{E}\}$. Denote by $|\cdot|$ the cardinality of a set. So, the in-degree of $v_i$ is $|\mathcal{N}_{i}^{in}|$. Similarly, the set of out-neighbors of $v_i$ is $\mathcal{N}_{i}^{out}\triangleq\{j|j\in \mathcal{V},(i,j)\in\mathcal{E}\}$. {The out-degree of $v_i$ is $|\mathcal{N}_{i}^{out}|$.} The weight adjacency matrix $\mathcal{G}$ is denoted by an $N\times N$ matrix $\mathcal{W}\triangleq[w_{ij}]$ where $w_{ij}$ refers to the weight of the edge from node $i$ to $j$. We assume that graph $\mathcal{G}$ has no self-loops.

We denote the original weight adjacency matrix by $\mathcal{W}^{o}=[w_{ij}^o] \in \mathbb{R}^{N \times N}$. Let $\mathcal{N}_{i,o}^{out}$($\mathcal{N}_{i,o}^{in}$) be the set of out-neighbors (in-neighbors) under the original optimal weight pattern $\mathcal{W}^o$. 

\subsection{Virus Spreading Model}

With the fact that cyber network nodes do not have human-like autoantibody/vaccination which can prevent individual from being infected again, we study the so-called susceptible-infected-susceptible (SIS) models. Consider a population of $N$ agents. Each agent can be either susceptible (S) or infected (I). Infected individuals infect others at rate $\beta_i\geq 0$. The intensity of interaction between $v_i$ and $v_j$ is described by the weight $w_{ij}\in \mathbb{R}$. Denote $\mathbf{w}_{i}=(w_{i1},...,w_{iN})'\in\mathbb{R}^N$. We assume that the weight is bounded by $\bar{w}_{ij} \in \mathbb{R}$. If $v_i \in \mathcal{V}$ is susceptible
while $v_j \in\mathcal{V}$ is infected, there is possibility that $v_i$ will be infected after the interaction. In addition,  each infected agent returns to the susceptible state at some rate $\sigma_i$. The state of a node $i$ at time $t\geq 0$ is a binary random variable $X_i(t) \in \{0,1\}$, with $X_i(t)=0$ ($X_i(t)=1$), indicating that agent $i$ is susceptible (infected). The state vector of all $N$ agents is denoted by $X(t)=(X_1(t),X_2(t),...,X_N(t))' \in \{0,1\}^N$. With the adaptive weight $w_{ij}(t)$ from agent $i$ to $j$, the stochastic state transitions of node $v_i$ from time $t$ to $t+\Delta t$ can be written as follows:
\begin{equation}\label{MarkovChain}
\begin{aligned}
&\mathbb{P}(X_i(t+\Delta t)
=1|X_i(t)=0,X(t))\\
&=\sum\limits_{j=1}^N w_{ij}\beta_j X_j(t)\Delta t + o(\Delta t),\\
&\mathbb{P}(X_i(t+\Delta t)=0|X_i(t)=1,X(t))=\sigma_i \Delta t + o(\Delta t).
\end{aligned}
\end{equation}

{The model (\ref{MarkovChain}) is computationally challenging under large-scale networks due to the exponentially increasing state space.} Hence, we resort to mean-field approximation of the Markov process \cite{Pare2017, Draief2010, VMP2009}. Denote $x_i(t) \in [0,1]$ as the probability of agent $i$ being infected at time $t$. The mean-field approximation then provides

\begin{equation}\label{EpDy}
    \dot{x}_i(t)=(1-x_i(t))\sum\limits_{j=1}^N w_{ij}(t)\beta_j x_j(t)-\sigma_i x_i(t),
\end{equation}
for $i=1,2,...,N$.
To write this dynamics equation in a more compact form, denote $\mathbf{x}(t)=(x_1(t),...,x_N(t))'$. We have
\begin{equation}\label{dynamic2}
\dot{\mathbf{x}}(t) =G(\mathbf{x}(t),\mathbf{W}(t)),
\end{equation}
where $G(\cdot,\cdot):\mathbb{R}^N \times \mathbb{R}^{N \times N} \rightarrow \mathbb{R}^N$ which can be written as
$G(\mathbf{x}(t),\mathbf{W}(t))=(W(t)B - D)\mathbf{x}(t) - X_d(t)W(t)B\mathbf{x}(t)$
where $W(t) = {[{w_{ij}(t)}]_{N \times N}}$, $B=diag(\beta_1,...,\beta_N)$, $D=diag(\sigma_1,...,\sigma_2)$, and $X_d(t)=diag(x_1(t),...,x_N(t))$.

{According to the discussion in \cite{Pare2017}, the $n$-intertwined model (\ref{EpDy}) gives an upper-bound for the exact probability of infection, $x_i(t)$. However, the mean-field approximation consider herein, while it is an approximation, is well constructed because the scale of networks, i.e., N in our model is large and we focus on the cases where $\beta/\sigma$ is above the threshold \cite{VMP2009}. }

The graph and the epidemic spreading process can be viewed as physical constraints. The agents in the network are coupled by these constraints while trying to minimize their own cost. {Such behaviors lead to differential games over networks, which will be introduced in the following section.}

\subsection{Differential Game Over Networks}

As we mentioned in Section \ref{Introduction}, the self-interested agents aim to minimize their own costs. One cost arise from malfunction caused by infection. Another cost for agent $i$ is to describe inefficiency or degradation of system performance caused by deviation from the original weight $w_{ij}^o$ for all $j\in\mathcal{N}$. We consider the original weight as an optimal weight under which the agent can achieve the most benefit.

For agent $i$, the infection cost function, given by $f_i:[0,1]\rightarrow \mathbb{R}^+$, is a function of $x_i(t) \in [0,1]$. $f_i$ is assumed to be monotonically increasing to capture the loss of being infected. A weight cost function for edge from $i$ to $j$ is given by $g_{ij}(w_{ij}(t)-w_{ij}^o)$ where $g_{ij}:\mathbb{R}\rightarrow \mathbb{R}^+$ is convex. The function satisfies $g_{ij}(w)=0$ at and only at $w=0$ for all $i,j\in\mathcal{N}$ because {the original weight is optimal to the agent when there is no infection. It is optimal in terms of the tradeoff between price and performance.} The marginal cost of deviation from the optimal weight will increase as the distance from the adapted weight to the optimal weight increases. Considering a time duration from $0$ to $T \geq 0$, the cost function of agent $i$ during time interval $[0,T]$ is given as follows by
\begin{equation}\label{costfunction}
J_i=\int\limits_{0}^T f_i(x_i(t))+\sum\limits_{j=1}^N g_{ij}(w_{ij}(t)-w_{ij}^o) dt . 
\end{equation}

As each node determines its own weight adaptation policy, it naturally leads to a differential game framework defined as follows.  Consider $N$ agents in the network as $N$ players with an index set $\mathcal{N}=\{1,...,N\}$. The duration of the evolution of the game is given by the time interval $[0,T]$. Denote $\mathbf{x}(t)=(x_1(t),...,x_N(t))'$. Let $\mathcal{X}=\{x\in\mathbb{R}^N| x_i\in [0,1],\forall i \in \mathcal{N}\}$ be the permissible set of the states. For each fixed $t\in [0,T]$, $\mathbf{x}(t)\in \mathcal{X}$. Let $\mathbf{w}_i(t)=(w_{i1},...,w_{iN})$ be the controls of player $i$. The admissible control set for player $i$ is $S_i=\{0 \leq w_{ij} \leq \bar{w}_{ij},\forall j\in \mathcal{N}\}$, i.e., for each fixed $t\in [0,T]$, $\mathbf{w}_i \in S_i \subset \mathbb{R}^N$. A differential equation is given by (\ref{EpDy}) whose solution describes the state trajectory of the game corresponding to the $N$-tuple of control functions $\{\mathbf{w}_i(t),0\leq t\leq T,i\in N\}$ and the given initial state $\mathbf{x}_0\triangleq(x_1(0),...,x_N(0))'=(x_{10},...,x_{N0})'$. Define a set-valued function $\eta_i(\cdot)$ for each $i\in N$ to characterize the information pattern of player $i$. We consider the open-loop pattern in our case where $\eta_i(t)=\{\mathbf{x}_0\},t\in[0,T]$. We can state our problem as the following differential game problem:
\begin{equation}\label{DifGam}
\begin{aligned}
\min\limits_{\mathbf{w}_i \in S_i}\ \ &J_i(\mathbf{w}_i)  \\
\textrm{s.t.~}\ &\textrm{(\ref{EpDy})},\\
\end{aligned}
\end{equation}
where $J_i(\cdot):\mathbb{R}^N\rightarrow \mathbb{R}$ and $x_i(0)=x_{i0}, \ i=1,2,...,N$. Each player aims to find a control policy $\mu_i(t, x_{i0})$ to generate a weight trajectory $w_i(t)$. Such control policies are open-loop ones that depend on the initial condition of the individual state. 
\begin{remark}
The game defined by (\ref{DifGam}) is a differential game over networks where the cost only depends on their own state and controls. Nodes interact with their neighbors. The network topology is captured by $w_{ij},i\in\mathcal{N},j\in\mathcal{N}$. The time-varying property of the network is described by $w_{ij}(t)$ for $t\in[0,T]$.
\end{remark}
{
\begin{remark}
Information structure determines the state information gained and recalled by players at time $t$. {The reasons why we adopt open-loop policies} are three-fold. First, the obtained open-loop policy can be implemented as a feedback policy \cite{TB1999} as is shown in Section \ref{AnaRes}. Since the dynamics (\ref{dynamic2}) is determined, the state at any time can be computed and used to determine the control policy. Second, to obtain a strongly time-consistent optimal and individual feedback policies, we have to resort to techniques of dynamic programming. However, a direct application of dynamic programming will not yield an individual feedback policy. Also, computation of the feedback control law derived from Hamilton\textendash Jacobi\textendash Bellman equation requires solving nonlinear PDEs which increases the difficulty of distributed implementation. Third, obtaining open-loop policy resorts to maximum principle which well presents the structure of the optimal solution. This helps us to analyze the inefficiency of the NE and obtain a penalty function to achieve social optimum as is shown in Section \ref{EffCom}.
\end{remark}
}

\section{Analytic Results}\label{AnaRes}
The solutions to the $N$-person non-cooperative nonzero-sum differential game (\ref{DifGam}) played with an open-loop information structure are open-loop Nash equilibria.
\begin{definition}
The weight adaptation trajectories or say the control trajectories $\{\mathbf{w}^*_i$, $i\in \mathcal{N}\}$ constitute an open-loop NE solution of the differential game (\ref{DifGam}) if the inequalities
\begin{equation}\label{DefNE}
\begin{aligned}
J_1(\mathbf{w}^*_1,\mathbf{w}^*_2,...,\mathbf{w}^*_N)&\leq J_1(\mathbf{w}_1,\mathbf{w}^*_2,...,\mathbf{w}^*_N) \\
J_2(\mathbf{w}^*_1,\mathbf{w}^*_2,...,\mathbf{w}^*_N)&\leq J_2(\mathbf{w}^*_1,\mathbf{w}_2,...,\mathbf{w}^*_N) \\
&\vdots \\
J_N(\mathbf{w}^*_1,\mathbf{w}^*_2,...,\mathbf{w}^*_N)&\leq J_N(\mathbf{w}^*_1,\mathbf{w}^*_2,...,\mathbf{w}_N) \\
\end{aligned}
\end{equation}
hold for all control trajectories $\mathbf{w}_i(t)\in S_i,t\in [0,T]$. We denote $x_i^*(t),t\in[0,T]$ the associated state trajectory for $i\in\mathcal{N}$.
\end{definition}
The definition states that at open-loop NE, no agents have incentive to deviate unilaterally away from the optimal trajectory from time $0$ to time $T$.

To obtain the necessary conditions for the open-loop NE, we make two mild assumptions.
\begin{assumption}\label{AssF}
For each $i\in\mathcal{N}$, the infection cost function $f_i(\cdot)$ is to be of $C^1$ class.
\end{assumption}

\begin{assumption}\label{AssG}
For each $i,j\in\mathcal{N}$, the weight deviation cost function $g_{ij}(\cdot)$ is to be of $C^1$ class.
\end{assumption}

Each player $i\in\mathcal{N}$ can decide to receive data or packets from any other agent. The following observation narrows down the set of possible solutions of the open-loop NE.

\begin{observation}\label{CutDownSpace}
If $\{u^*_{ij}(t),i\in\mathcal{N},j\in \mathcal{N}_{i,o}^{out}\}$ is an open-loop NE solution for the following differential game
\begin{equation}\label{DifGam2}
\begin{aligned}
&\min\limits_{\mathbf{u}_i \in U_i} J_i=\int\limits_{0}^T f_i(x_i(t))+\sum\limits_{j\in\mathcal{N}_{i,o}^{out}} g_{ij}(u_{ij}(t)-w_{ij}^o) dt  \\
& \textrm{s.t.~}\ \dot{x}_i(t)=(1-x_i(t))\sum\limits_{j\in \mathcal{N}_{i,o}^{out}} u_{ij}(t)\beta_j x_j(t)-\sigma_i x_i(t),\\ 
& \ \ \ \ \ x_i(0)=x_{i0},i=1,2,...,N,
\end{aligned}
\end{equation}
with $u_{ij}(t)\in[0,w_{ij}^o]$ for $i\in\mathcal{N},j\in \mathcal{N}_{i,o}^{out},t\in[0,T]$, and $\{w_{ij}^*(t), i\in\mathcal{N}, j\in \mathcal{N}\}$ is an open-loop NE solution for the differential game defined by (\ref{DifGam}), then we have 
\begin{equation}
    w^*_{ij}(t)=\begin{cases}
               u^*_{ij}(t)\ \ \ &\textrm{if}\ j\in\mathcal{N}_{i,o}^{out}\\
               0\ \ \ & \textrm{otherwise}
            \end{cases}
\end{equation}
for all player $i$ and for each $t\in[0,T]$.
\end{observation}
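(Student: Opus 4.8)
The plan is to exploit a decoupling property of the game: once the strategies $\mathbf{w}_{-i}^*$ of all players other than $i$ are fixed, the trajectories $x_j(\cdot)$, $j\neq i$, are completely determined and independent of $\mathbf{w}_i$, since $\mathbf{w}_i$ enters only the $i$-th equation of (\ref{EpDy}). Player $i$'s best-response problem thus reduces to a scalar control problem with $\dot{x}_i=(1-x_i)\sum_j w_{ij}\beta_j x_j(t)-\sigma_i x_i$ in which the $x_j(t)$ are known nonnegative functions. The first ingredient I would establish is monotonicity: on the forward-invariant set $x_i\in[0,1]$ (invariance follows from $\dot{x}_i\ge 0$ at $x_i=0$ and $\dot{x}_i=-\sigma_i x_i\le 0$ at $x_i=1$, using $x_j\ge 0$), the right-hand side is nondecreasing in each $w_{ij}$ because $(1-x_i)\beta_j x_j(t)\ge 0$; the scalar comparison principle for ODEs then shows that replacing $\mathbf{w}_i(\cdot)$ by a pointwise-smaller admissible control produces a pointwise-smaller $x_i(\cdot)$.

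The core of the proof is a truncation argument. Suppose $\{w^*_{ij}\}$ is an open-loop NE of (\ref{DifGam}) but $w^*_{ij}(t)>w^o_{ij}$ on a set of positive measure for some $i,j$. Consider the deviation $\tilde{w}_{ij}(t):=\min\{w^*_{ij}(t),w^o_{ij}\}$ (keeping all other components of $\mathbf{w}^*_i$), which is admissible because $0\le\tilde{w}_{ij}\le w^*_{ij}\le\bar{w}_{ij}$. Since $\tilde{w}_{ij}(t)\le w^*_{ij}(t)$ everywhere, monotonicity gives $\tilde{x}_i(t)\le x^*_i(t)$ for all $t$, hence $\int_0^T f_i(\tilde{x}_i)\,dt\le\int_0^T f_i(x^*_i)\,dt$ as $f_i$ is increasing; and because $g_{ij}$ is convex with its unique zero at $0$, $g_{ij}(\tilde{w}_{ij}(t)-w^o_{ij})\le g_{ij}(w^*_{ij}(t)-w^o_{ij})$ with strict inequality wherever $w^*_{ij}>w^o_{ij}$. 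Therefore $J_i(\tilde{\mathbf{w}}_i,\mathbf{w}^*_{-i})<J_i(\mathbf{w}^*_i,\mathbf{w}^*_{-i})$, contradicting the NE inequality for player $i$. So $w^*_{ij}(t)\le w^o_{ij}$ almost everywhere; in particular $w^o_{ij}=0$ for $j\notin\mathcal{N}^{out}_{i,o}$ forces $w^*_{ij}(t)=0$.

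Finally I would carry out the reduction. With $w^*_{ij}\equiv 0$ for all $j\notin\mathcal{N}^{out}_{i,o}$, the dynamics (\ref{EpDy}) and the cost (\ref{costfunction}) evaluated along $\mathbf{w}^*$ coincide term-by-term with the dynamics and cost of (\ref{DifGam2}) (the dropped weight terms contribute $g_{ij}(0)=0$ and the dropped dynamics terms vanish), and the constraint $w^*_{ij}\in[0,w^o_{ij}]$ on the surviving edges is exactly the constraint set of (\ref{DifGam2}). Conversely, zero-padding any profile of (\ref{DifGam2}) yields an admissible profile of (\ref{DifGam}) with the same costs, and by the truncation argument any improving unilateral deviation in (\ref{DifGam}) may be taken to satisfy $w_{ij}\le w^o_{ij}$ and hence corresponds to a deviation in (\ref{DifGam2}). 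These two observations make zero-padding a bijection between the open-loop NE of (\ref{DifGam2}) and those of (\ref{DifGam}), which gives the stated identification. The step I expect to be the main obstacle is the monotone-comparison claim: one must confirm that $x_i$ stays in $[0,1]$ so that the $i$-th vector field is genuinely monotone in each $w_{ij}$, and that the comparison remains valid despite time-varying, state-dependent coefficients; this works because the $i$-th equation is affine in $x_i$ with nonnegative input coefficients, reducing it to the standard scalar comparison lemma. (The ``if\textendash then'' wording of the statement is really the bijection above; for the literal equality one should additionally note that it presumes the NE is matched consistently, e.g.\ uniqueness of the NE of (\ref{DifGam2}).)
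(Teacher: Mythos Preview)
Your approach is essentially the same as the paper's: a unilateral truncation (deviation) argument showing that any NE weight exceeding $w_{ij}^o$ can be lowered to strictly reduce player $i$'s cost. The paper's proof is a two-sentence sketch that simply asserts ``obviously, player $i$ can lower his own cost by deviating $w_{ij}(t)$ from $w^*_{ij}(t)$ to $0$'' and then claims the case $w^*_{ij}>w^o_{ij}$ is similar; you have supplied the mechanism the paper leaves implicit, namely the decoupling of $x_j$, $j\neq i$, from $\mathbf{w}_i$, the forward invariance of $[0,1]$, and the scalar monotone comparison that makes $x_i$ nonincreasing under pointwise reduction of $\mathbf{w}_i$. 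So your proposal is correct and more complete than the paper's.

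One point worth noting: the paper's proof only establishes $w^*_{ij}(t)\le w^o_{ij}$, i.e.\ that every NE of (\ref{DifGam}) lies in the restricted space and hence corresponds to \emph{some} NE of (\ref{DifGam2}). It does not actually justify the literal ``any NE of (\ref{DifGam2}) equals any NE of (\ref{DifGam})'' reading of the statement. Your bijection argument and your parenthetical remark about uniqueness pinpoint exactly this: what is really proved (by both you and the paper) is a correspondence between the NE sets, and the stated equality requires either uniqueness or a consistent pairing. That observation is a genuine refinement over the paper's treatment.
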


Proof: See Appendix $\ref{ProofCutDownSpace}$.

Observation \ref{CutDownSpace} simplifies the searching process for the open-loop NE. Instead of analyzing problem $(\ref{DifGam})$, we can focus on problem (\ref{DifGam2}) which contains a smaller admissible control set. Define $\mathbf{u}_i=\{u_{ij},j\in\mathcal{N}_{i,o}^{out}\}$. To be specific, the admissible control set of game problem (\ref{DifGam2}) for player $i$ is $U_i=\{0\leq u_{ij}(t) \leq w_{ij}^o,j\in \mathcal{N}_{i,o}^{out},t\in[0,T]\}$. From Theorem 5.1 of \cite{TB1999} and Lemma \ref{Lip}, the differential equation in (\ref{DifGam2}) admits a unique solution if the weight adaptation control is continuous in $t$.

Next, we discuss the derivation of candidate NE solutions for differential game (\ref{DifGam2}) when the information structure of the game is open-loop pattern. Utilizing techniques in optimal control theory, we arrive at the following result.

\begin{theorem}\label{NecCon}
For the $N$-person differential game (\ref{DifGam2}), we have assumptions \ref{AssF} and \ref{AssG}. Then, if $\{ \mathbf{u}^*_i(t),i\in \mathcal{N}\}$ is an open-loop NE solution, and $\{\mathbf{x}^*(t),0\leq t\leq T\}$ is the corresponding state trajectory, there exist $N$ costate functions $\mathbf{p}_i(\cdot):[0,T]\rightarrow \mathbb{R}^N,i\in\mathcal{N}$, whose $j$-th component is denoted by ${p}_{ij}(\cdot)$, such that the following relations are satisfied:
\begin{equation}\label{NecConDy}
\begin{aligned}
    \dot{x}_i^*(t)&=(1-x_i^*)\sum\limits_{j\in\mathcal{N}_{i,o}^{out}}u_{ij}^*(t)\beta_j x_j^*-\sigma_i x^*_i(t),\\
    & x_i^*(0)=x_{i0},\ \ \ \forall i\in\mathcal{N},
\end{aligned}
\end{equation}

\begin{equation}\label{ConHam}
\begin{aligned}
&\mathbf{u}_i^*(t)=\arg \min\limits_{\mathbf{u}_i \in U_i}\\ &H_i(t,\mathbf{p}_i(t),\mathbf{x}^*,\mathbf{u}_1^*(t),...,\mathbf{u}^*_{i-1},\mathbf{u}_i,\mathbf{u}^*_{i+1}(t),...,\mathbf{u}_N^*(t)),
\end{aligned}
\end{equation}

\begin{equation}\label{CosDy}
    \dot{\mathbf{p}}_i(t)=\Gamma_i(t,\mathbf{x}^*,\mathbf{u}^*_1,...,\mathbf{u}^*_N) \mathbf{p}_i(t)+\gamma_i(t),\ \ \ \mathbf{p}_i(T)=0,
\end{equation}
where 
\begin{equation}\label{HamNE}
\begin{aligned}
    &H_i(t,\mathbf{p}_i,\mathbf{x},\mathbf{u}_1,...,\mathbf{u}_N)
    \triangleq f_i(x_i(t))+ \sum\limits_{j\in \mathcal{N}_{i,o}^{out}} g_{ij}(u_{ij}-w_{ij}^o)\\
    &+\sum\limits_{j=1}^N p_{ij} \left\{(1-x_j)\sum\limits_{k\in \mathcal{N}_{j,o}^{out}}u_{jk}\beta_kx_k(t) + \sigma_j x_j(t)\right\},
\end{aligned}
\end{equation}
and $\Gamma_i$ is a matrix given by

\begin{equation}\label{CoMat}
\Gamma_{i,mn}=\begin{cases}
               \sum\limits_{j\in\mathcal{N}_{m,o}^{out}}u^*_{mj}(t)\beta_jx^*_j (t)+\sigma_m \ \ \ &\text{if}\ n=m\\
               -(1-x_n^*(t))u_{nm}^*(t)\beta_m\ \ \ &\textrm{if}\ n\in\mathcal{N}_{m,o}^{in}\\
               0 &\textrm{otherwise},
            \end{cases}
\end{equation}
$\gamma_i$ is a vector whose $i$-th component is $-df_i/dx_i$ and other components are zero, for $i\in \mathcal{N}$.
\end{theorem}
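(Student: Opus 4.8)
The plan is to apply Pontryagin's minimum principle to each player's optimal control problem separately, holding the other players' controls fixed at their equilibrium values, and then collect the resulting necessary conditions. By the definition of an open-loop NE (Definition 1), for each fixed $i\in\mathcal{N}$ the equilibrium control $\mathbf{u}_i^*$ solves the single-player optimal control problem of minimizing $J_i$ subject to the full system dynamics $\dot{x}_j(t) = (1-x_j)\sum_{k\in\mathcal{N}_{j,o}^{out}}u_{jk}\beta_k x_k - \sigma_j x_j$ for all $j$, where $u_{jk}=u_{jk}^*$ is frozen for $j\neq i$. Note that although $J_i$ only depends on $x_i$ and $\mathbf{u}_i$ explicitly, the state variable that player $i$ controls is coupled to the \emph{entire} state vector $\mathbf{x}$ through the dynamics (because $x_i$'s evolution involves $x_k$ for $k\in\mathcal{N}_{i,o}^{out}$, which in turn depend on their own out-neighbors, etc.). Hence player $i$'s optimal control problem is genuinely $N$-dimensional in the state, which is why each costate $\mathbf{p}_i$ is an $N$-vector rather than a scalar.

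The key steps, in order, are: (i) For each $i$, write the Hamiltonian $H_i$ as in (\ref{HamNE}) — the running cost $f_i(x_i)+\sum_{j\in\mathcal{N}_{i,o}^{out}}g_{ij}(u_{ij}-w_{ij}^o)$ plus $\mathbf{p}_i^\top$ times the right-hand side of the full dynamics (\ref{dynamic2}); here I should double-check the sign convention, since the statement writes $+\sigma_j x_j$ inside the bracket, which suggests the costate equation is being written with a particular sign choice that must be carried consistently. (ii) Invoke the maximum principle (e.g., Theorem 5.1 / the relevant theorem of \cite{TB1999}), whose hypotheses require $C^1$ dependence of the Hamiltonian on the state — this is exactly what Assumptions \ref{AssF} and \ref{AssG} guarantee, together with the smoothness of the bilinear dynamics and the existence/uniqueness of trajectories from Lemma \ref{Lip} — to conclude the existence of an absolutely continuous costate $\mathbf{p}_i$ satisfying the adjoint equation $\dot{\mathbf{p}}_i = -\partial H_i/\partial \mathbf{x}$ evaluated along $(\mathbf{x}^*,\mathbf{u}^*)$, with transversality condition $\mathbf{p}_i(T)=0$ (free terminal state). (iii) Compute $\partial H_i/\partial x_n$ componentwise: the term $f_i(x_i)$ contributes $df_i/dx_i$ only to the $i$-th component (giving the vector $\gamma_i$), and differentiating the bilinear term $\sum_j p_{ij}\{(1-x_j)\sum_k u_{jk}\beta_k x_k + \sigma_j x_j\}$ with respect to $x_n$ produces the diagonal entry $\sum_{j\in\mathcal{N}_{n,o}^{out}}u^*_{nj}\beta_j x^*_j + \sigma_n$ (from the $j=n$ factor $(1-x_n)$ and the $+\sigma_n x_n$ piece) and the off-diagonal entries $-(1-x_n^*)u^*_{nm}\beta_m$ whenever $n\in\mathcal{N}_{m,o}^{in}$ (i.e., $m$ is an out-neighbor of $n$, so $x_n$ appears in $x_m$'s dynamics via the $k=n$ term). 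This yields precisely the matrix $\Gamma_i$ of (\ref{CoMat}). (iv) State the pointwise minimization condition (\ref{ConHam}): since the other players' controls are fixed, $\mathbf{u}_i^*(t)$ minimizes $H_i$ over $U_i$ for a.e.\ $t$. Finally, collecting these conditions over all $i\in\mathcal{N}$ and appending the state equation (\ref{NecConDy}) gives the full system claimed.

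The main obstacle — and the only place requiring genuine care rather than routine bookkeeping — is step (iii): getting the indexing of $\Gamma_i$ exactly right. The subtlety is distinguishing which state $x_n$ appears in which equation: $x_n$ appears as the "controlled" variable in node $n$'s own dynamics (contributing to the diagonal), and also appears linearly (through the $\beta_n x_n$ factor) in the dynamics of every node $m$ for which $n$ is an out-neighbor, i.e., $m$ such that $n\in\mathcal{N}_{m,o}^{out}$, equivalently $n\in\mathcal{N}_{m,o}^{in}$ in the paper's notation — one must be scrupulous about the direction of the edge and not confuse in-neighbors with out-neighbors when transposing. A secondary point to verify is that Observation \ref{CutDownSpace} legitimately lets us work with the reduced game (\ref{DifGam2}) with its smaller control set $U_i$ instead of (\ref{DifGam}), so that the sums over out-neighbors $\mathcal{N}_{j,o}^{out}$ in $H_i$ and $\Gamma_i$ are the correct ones; this is already established. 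Beyond these, the argument is a direct application of the maximum principle player-by-player, and the $C^1$ assumptions plus Lemma \ref{Lip} supply all the regularity the cited theorem demands.
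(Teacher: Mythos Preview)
Your proposal is correct and follows essentially the same approach as the paper: invoke the necessary conditions for open-loop Nash equilibria from \cite{TB1999} (the paper cites Theorem~6.11 there) player by player, then compute $-\partial H_i/\partial x_m$ componentwise to read off $\Gamma_i$ and $\gamma_i$, with the transversality condition $\mathbf{p}_i(T)=0$ coming from the free terminal state. Your flag on the $+\sigma_j x_j$ sign in the stated Hamiltonian is well taken---it appears to be a typo for $-\sigma_j x_j$, since the costate formulas in (\ref{CoMat}) and the paper's own derivation (\ref{GetCost}) are consistent with the correct dynamics, not with the Hamiltonian as printed.
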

Proof: See Appendix \ref{ProofNecCon}.

{Note that $\Gamma_i$ turns out to be the same for different $i$. In later discussion, we shall omit the idex $i$.} Now, the dynamics of the costate function can be given as $\dot{\mathbf{p}}_i(t)=\Gamma(t)\mathbf{p}_i(t)+\gamma_i(t)$ for $i\in\mathcal{N}$ which sheds some light on the design for achieving social welfare in the following section. $\Gamma(t)$ is a $L$-matrix \cite{Young2014} for every $t\in[0,T]$ where the diagonal entries of $\Gamma(t)$ are positive and all off-diagonal entries are non-positive. {Therefore $\Gamma(t)$ is structurally in line with the graph Laplacian whose diagonal entries are the out-degrees of the $N$ agents \cite{Lin1974}.} That is for every zero or negative entry of the matrix $\Gamma(t)$, the corresponding entry of the graph Laplacian is zero or negative respectively and vice versa. If the original graph is a directed acyclic graph, $\Gamma(t)$ is a lower triangular matrix given the index of a proper permutation. Other than the topology information, $\Gamma(t)$ also contains the infection information. Note that even though we write the dynamics of the costates in an affine form, it is actually not affine which is because $\Gamma(t)$ depends on $\mathbf{x}^*(t)$ and $u^*_{ij},i\in\mathcal{N},j\in\mathcal{N}$ as we can see from (\ref{CoMat}) and $u^*_{ij}$ is dependent on $p_{ii}$ as we will show next in Theorem \ref{ConTheo}.

\begin{theorem}\label{ConTheo}
Define $\phi_{ij}(t):=p_{ii}(t)(1-x_i^*(t))\beta_j x_j^*(t)$ where $p_{ii}(\cdot)$ is the $i$th component of the costate function $\mathbf{p}_i(\cdot)$. The basic structure of the NE-based optimal weight control, i.e., the solution to (\ref{ConHam}), can be written as:
\begin{equation}\label{ConRule}
\begin{aligned}
&u^*_{ij}(t)=\\
&\begin{cases}
               0, &-\phi_{ij}(t)\leq g'_{ij}(-w_{ij}^o),\\
               (g'_{ij})^{-1}(-\phi_{ij}(t)),&g'_{ij}(-w_{ij}^o)<-\phi_{ij}(t)< g'_{ij}(0),\\
               w_{ij}^o, &-\phi_{ij}(t)\geq g'_{ij}(0),
            \end{cases}
\end{aligned}
\end{equation}
for $i\in\mathcal{N},j\in\mathcal{N}_{i,o}^{out}$.
\end{theorem}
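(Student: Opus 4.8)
The plan is to derive the minimizer of the Hamiltonian $H_i$ with respect to the control $\mathbf{u}_i$ pointwise in $t$, exploiting the fact that the only terms in $H_i$ that depend on $\mathbf{u}_i = \{u_{ij} : j \in \mathcal{N}_{i,o}^{out}\}$ are the weight cost $\sum_{j\in\mathcal{N}_{i,o}^{out}} g_{ij}(u_{ij}-w_{ij}^o)$ and the single summand of the drift term carrying $p_{ii}$, namely $p_{ii}(1-x_i)\sum_{k\in\mathcal{N}_{i,o}^{out}} u_{ik}\beta_k x_k$. (The other summands $p_{ij}$, $j\neq i$, multiply drifts $\dot x_j$ that do not involve $\mathbf{u}_i$, since player $j$'s dynamics depend only on $\mathbf{u}_j$.) Collecting terms, $H_i$ as a function of $\mathbf{u}_i$ is separable across $j$: up to $\mathbf{u}_i$-independent constants it equals $\sum_{j\in\mathcal{N}_{i,o}^{out}} \left[ g_{ij}(u_{ij}-w_{ij}^o) + \phi_{ij}(t)\, u_{ij} \right]$, with $\phi_{ij}(t) = p_{ii}(t)(1-x_i^*(t))\beta_j x_j^*(t)$ exactly as defined in the statement. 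So the vector minimization over $U_i = \prod_{j} [0, w_{ij}^o]$ decouples into scalar minimizations $\min_{0\le u_{ij}\le w_{ij}^o}\ \psi_{ij}(u_{ij})$ with $\psi_{ij}(u) := g_{ij}(u-w_{ij}^o) + \phi_{ij}(t) u$.

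Next I would solve each scalar problem. Since $g_{ij}$ is convex (and $C^1$ by Assumption~\ref{AssG}), $\psi_{ij}$ is convex and $C^1$ on $[0,w_{ij}^o]$ with derivative $\psi_{ij}'(u) = g_{ij}'(u-w_{ij}^o) + \phi_{ij}(t)$, which is nondecreasing in $u$. The minimizer over the box is characterized by the standard first-order / projection condition: it is the interior stationary point if one exists in $(0,w_{ij}^o)$, otherwise the endpoint toward which $\psi_{ij}'$ points. Concretely, $u^*_{ij} = 0$ iff $\psi_{ij}'(0)\ge 0$, i.e. $g_{ij}'(-w_{ij}^o) \ge -\phi_{ij}(t)$; $u^*_{ij} = w_{ij}^o$ iff $\psi_{ij}'(w_{ij}^o)\le 0$, i.e. $g_{ij}'(0) \le -\phi_{ij}(t)$; and for the intermediate regime $g_{ij}'(-w_{ij}^o) < -\phi_{ij}(t) < g_{ij}'(0)$, the interior stationarity $g_{ij}'(u^*_{ij}-w_{ij}^o) = -\phi_{ij}(t)$ holds, which by convexity of $g_{ij}$ (so $g_{ij}'$ is nondecreasing, hence invertible on the relevant range, with a convention to pick a value where $g_{ij}'$ is only weakly monotone) gives $u^*_{ij} = (g_{ij}')^{-1}(-\phi_{ij}(t)) + w_{ij}^o$. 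I would then note this matches (\ref{ConRule}) — modulo what appears to be an $+w_{ij}^o$ offset convention absorbed into the authors' definition of $(g_{ij}')^{-1}$, i.e. they treat $g_{ij}'$ as a function of the deviation argument and read its inverse back as a weight; I would state this convention explicitly so the three cases line up with the claimed formula.

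The main obstacle, and the step needing the most care, is the monotonicity/invertibility of $g_{ij}'$ in the middle branch: convexity only guarantees $g_{ij}'$ nondecreasing, not strictly increasing, so $(g_{ij}')^{-1}$ may be set-valued. I would address this by either (i) adopting the convention that $(g_{ij}')^{-1}(y)$ denotes any point in the (nonempty, since $g_{ij}$ is $C^1$ and the value $y$ lies strictly between $g_{ij}'(-w_{ij}^o)$ and $g_{ij}'(0)$) preimage set — all such choices being global minimizers — or (ii) remarking that under the paper's standing interpretation that marginal deviation cost is strictly increasing (stated informally after (\ref{costfunction})), $g_{ij}'$ is strictly increasing and the inverse is a genuine function. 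A secondary, minor point to verify is that the constraint set $U_i$ is exactly the box $\prod_j[0,w_{ij}^o]$ as established right after Observation~\ref{CutDownSpace}, and that the argmin in (\ref{ConHam}) is over $\mathbf{u}_i$ with all other players' controls and the state frozen at their NE/optimal values — which is precisely why the cross terms drop and the separability used above is legitimate. Everything else is routine scalar convex optimization over an interval.
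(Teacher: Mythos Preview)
Your proposal is correct and follows essentially the same approach as the paper's proof: both identify that $H_i$ is convex and separable in the $u_{ij}$'s, set the derivative $g_{ij}'(u_{ij}-w_{ij}^o)+\phi_{ij}(t)=0$, and project onto $[0,w_{ij}^o]$ to obtain the three-case rule. Your treatment is in fact more careful --- you correctly flag the $+w_{ij}^o$ offset (the paper's own proof writes $u^*_{ij}=(g_{ij}')^{-1}(-\phi_{ij}(t))$ without the shift, so the convention you suspected is indeed being used implicitly) and the strict-versus-weak monotonicity issue for $(g_{ij}')^{-1}$, which the paper glosses over.
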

Proof: See Appendix \ref{ProofConTheo}.

Condition (\ref{ConHam}) in Theorem \ref{NecCon} can thus be replaced by $(\ref{ConRule})$. Theorem \ref{NecCon} together with (\ref{ConRule}) provides a weight adaptation scheme where each agent adapts its weight to minimize the possibility of being infected and the loss of efficiency/interest. The weight of the edge from player $i$ to player $j$, controlled by player $i$, is based on the costate component $p_{ii}(t)$, player $i$'s own infection $x_i(t)$ and its out-neighbors infection. Apparently, the higher the infection level of agent $j$ is, the lower the weight of edge $(i,j)$ should be. As is shown in (\ref{CosDy}) and (\ref{CoMat}), $p_{ii}(t)$ is highly coupled and it contains information about the effect of the whole network.

\begin{remark}
Based on the structure of the optimal control (\ref{ConRule}), the dynamics of costates (\ref{CosDy}) as well as Lemma  \ref{Lip}, we can infer that the NE-based optimal control trajectory $u^*_{ij}(t)$ is continuous for every $i\in\mathcal{N}, j\in\mathcal{N}$ which means there is no switching in the optimal weight adaptation. 
\end{remark}

{\begin{remark}
From (\ref{ConRule}), we know the weight between agents $i$ and $j$ may be adapted to zero at certain time as one can see that $u^*_{ij}(t)=0$ if $-\phi_{ij}(t)\leq g'_{ij}(-\omega^o_{ij})$. That means the connection between agent $i$ and $j$ may be disconnected temporarily which will be restored according to Theorem \ref{CosTerm}. For agent $i$, if all its out-links have weight zero, i.e., $-\phi_{ij}(t)\leq g'_{ij}(-\omega^o_{ij})$ for all $j\in \mathcal{N}_{i,o}^{out}$, we can view this agent as being quarantined from infection. We say being quarantined from infection because there might still be in-links connecting to agent $i$ which means here, the concept of being quarantined is different from the concept in undirected graph. Besides, the weight adaptation scheme we proposed is different from quarantining in a sense that the weight adaptation scheme does not need to completely disconnect nodes from all other others but rather adjust weights to connect more loosely with particular nodes with a higher likelihood of infection.  
\end{remark}

}

\begin{corollary}
If $g_{ij}(\cdot)$ is concave, i.e., the marginal cost of deviation increases as the adapted weight becomes more far away from the optimal weight, the optimal control policy can be given as follows
\begin{equation}\label{ConRuleConcave}
\begin{aligned}
&u^*_{ij}(t)=\begin{cases}
               0, &\phi_{ij}(t) \geq \frac{g_{ij}(-w_{ij}^o)}{w_{ij}^o},\\
               w_{ij}^o,&\phi_{ij}(t) < \frac{g_{ij}(-w_{ij}^o)}{w_{ij}^o},
            \end{cases}
\end{aligned}
\end{equation}
for $i\in\mathcal{N},j\in\mathcal{N}_{i,o}^{out}$, where $\phi_{ij}(t)$ is defined in Theorem \ref{ConTheo}.
\end{corollary}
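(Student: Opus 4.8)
\emph{Proof proposal.} The plan is to follow the proof of Theorem~\ref{ConTheo} up to the pointwise Hamiltonian minimization and then replace the convex first-order analysis by a concavity argument. First I would isolate, in the Hamiltonian $H_i$ of (\ref{HamNE}), the terms that depend on player $i$'s control $\mathbf{u}_i$. Since player $i$ only sets $u_{ij}$ for $j\in\mathcal{N}_{i,o}^{out}$, the only such terms are $\sum_{j\in\mathcal{N}_{i,o}^{out}} g_{ij}(u_{ij}-w_{ij}^o)$ and the $j=i$ summand of the costate term, namely $p_{ii}(1-x_i)\sum_{k\in\mathcal{N}_{i,o}^{out}}u_{ik}\beta_k x_k$. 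Evaluating along the NE state and costate trajectories and recalling $\phi_{ij}(t):=p_{ii}(t)(1-x_i^*(t))\beta_j x_j^*(t)$ from Theorem~\ref{ConTheo}, the minimization in (\ref{ConHam}) decouples over $j$ into the scalar problems $\min_{0\le u\le w_{ij}^o} h_{ij}(u)$, where $h_{ij}(u):=g_{ij}(u-w_{ij}^o)+\phi_{ij}(t)\,u$, and for each fixed $t$ the quantity $\phi_{ij}(t)$ is a constant that does not depend on $u$ (it is built only from $p_{ii}$ and the states).

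The hypothesis enters only at the next step. Because $g_{ij}$ is concave and $u\mapsto\phi_{ij}(t)\,u$ is affine, $h_{ij}$ is concave on $[0,w_{ij}^o]$, hence attains its minimum at one of the two endpoints, so $u_{ij}^*(t)\in\{0,w_{ij}^o\}$, and it suffices to compare the two endpoint values. Using the normalization $g_{ij}(0)=0$ stated after (\ref{costfunction}), I get $h_{ij}(0)=g_{ij}(-w_{ij}^o)$ and $h_{ij}(w_{ij}^o)=\phi_{ij}(t)\,w_{ij}^o$. Comparing these two values and dividing by $w_{ij}^o>0$ (finite and positive since $j\in\mathcal{N}_{i,o}^{out}$) gives exactly (\ref{ConRuleConcave}): $u_{ij}^*(t)=0$ when $\phi_{ij}(t)\ge g_{ij}(-w_{ij}^o)/w_{ij}^o$ and $u_{ij}^*(t)=w_{ij}^o$ otherwise, with ties assigned to $u_{ij}^*(t)=0$.

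I do not expect a genuine obstacle here; the only care needed is (i) confirming that $\phi_{ij}(t)$ is independent of $u_{ij}$ so that the scalar subproblem really has constant coefficients and decouples across $j$, and (ii) checking the degenerate cases ($\phi_{ij}(t)<0$, or equality in the threshold), all of which the concavity/endpoint argument handles uniformly. It is also worth remarking that, in contrast to Theorem~\ref{ConTheo}, the resulting law is bang-bang, so the no-switching continuity observation following Theorem~\ref{ConTheo} need not carry over to the concave case.
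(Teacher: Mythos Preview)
Your argument is correct and is exactly the natural one: isolate the $u_{ij}$–dependent part of $H_i$ as $h_{ij}(u)=g_{ij}(u-w_{ij}^o)+\phi_{ij}(t)\,u$, observe that concavity of $g_{ij}$ makes $h_{ij}$ concave so the minimum over $[0,w_{ij}^o]$ is attained at an endpoint, and compare $h_{ij}(0)=g_{ij}(-w_{ij}^o)$ with $h_{ij}(w_{ij}^o)=\phi_{ij}(t)\,w_{ij}^o$. The paper states this corollary without proof, so there is nothing to compare against; your derivation is precisely the intended complement to the convex case in Theorem~\ref{ConTheo}, and your side remarks (decoupling across $j$, independence of $\phi_{ij}$ from $u_{ij}$, and the failure of the continuity remark in the bang-bang regime) are all on point.
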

We can see that if $g_{ij}(\cdot)$ is concave, the optimal control policy switches between $0$ and $w_{ij}^o$. In this paper, we focus our study on the case when $g_{ij}(\cdot)$ is convex.

Before stepping into the numerical computation of the open-loop NE candidates, we go into further analysis and obtain other structural results that would be beneficial for more insightful understanding of the weight adaptation mechanism.

\begin{theorem}\label{CosTerm}
The costate function and the open-loop control trajectories have the following properties:
\begin{itemize}
\item[(i)] Along the open-loop NE trajectory, $p_{ij}(t)\geq 0$ holds for all $i,j \in \mathcal{N}, j\neq i$ and all $t\in[0,T]$. Furthermore, $p_{ii}(t)$ stays positive for all $i\in \mathcal{N}$ and all $t\in[0,T)$.

\item[(ii)] The open-loop NE control trajectory $u_{ij}^*(t)$, for $i\in\mathcal{N},j\in\mathcal{N}_{i,o}^{out}$, satisfies $u_{ij}^*(t)=w_{ij}^o$ at and only at $t=T$ and for $t<T$, $u_{ij}^*(t)<w_{ij}^o$.

\item[(iii)] If $|\mathcal{N}_{i,o}^{in}|=0$, i.e., the in-degree of player $i$ is zero in the original graph, under linear infection cost function $f_i(x_i(t))=\alpha_i x_i(t)$, the component $p_{ii}(t)$ is bounded above by $\alpha_i/\sigma_i$. That is, $p_{ii}(t) \leq \alpha_i/\sigma_i$ for $ t\in[0,T]$.

\item[(iv)] If $|\mathcal{N}_{i,o}^{out}|=0$, i.e., the out-degree of player $i$ is zero in the original graph, under linear infection cost function where $f_i(x_i(t))=\alpha_i x_i(t)$, the costate component $p_{ii}(t)$ is strictly monotonically decreasing over $t$.
\end{itemize}
\end{theorem}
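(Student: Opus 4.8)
The plan is to prove the four items in the order (i), (ii), (iii)--(iv); item (i) does the real work, since the sign information on the costates it produces is exactly what items (ii)--(iv) consume, and in the two degenerate cases of (iii)--(iv) the equation for $p_{ii}$ collapses to a scalar linear ODE that can be solved in closed form. For item (i) I would integrate the adjoint system (\ref{CosDy})--(\ref{CoMat}) backward in time from its terminal value $\mathbf{p}_i(T)=0$ and show that the nonnegative orthant is invariant under this backward flow. Two facts are used: $\Gamma(t)$ is an $L$-matrix, so its off-diagonal entries are nonpositive; and $\gamma_i(t)$ has a single nonzero entry, $-f_i'(x_i^*(t))$, in position $i$, which is nonpositive because $f_i$ is increasing. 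On the boundary face where $p_{ij}=0$ for some $j\ne i$ (and all components are $\ge 0$), (\ref{CoMat}) gives $\dot p_{ij}=\sum_{k\ne j}\Gamma_{jk}p_{ik}\le 0$, because nonpositive off-diagonal entries multiply nonnegative components; hence $p_{ij}$ cannot cross into negative values when time is run backward, so $p_{ij}(t)\ge 0$ for all $i,j$ and all $t$. For the strict positivity of $p_{ii}$ on $[0,T)$, note that on $\{p_{ii}=0\}$ one has $\dot p_{ii}=\sum_{k\ne i}\Gamma_{ik}p_{ik}-f_i'(x_i^*)\le -f_i'(x_i^*)\le 0$; since $\mathbf{p}_i(T)=0$, this drives $p_{ii}$ strictly away from $0$ immediately to the left of $T$ and, with the same sign of $\dot p_{ii}$ at any would-be zero, prevents it from ever returning --- verbatim when $f_i'>0$, in particular for the linear cost $f_i(x)=\alpha_i x$ with $\alpha_i>0$, and in general after a short extra argument ruling out an interior zero of $p_{ii}$ using strict monotonicity of $f_i$.

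For item (ii), since $g_{ij}$ is convex, nonnegative and vanishes only at $0$, the origin is its unique minimizer and $g_{ij}'(0)=0$; hence the top branch of (\ref{ConRule}) reads $u_{ij}^*(t)=w_{ij}^o\iff -\phi_{ij}(t)\ge 0\iff\phi_{ij}(t)\le 0$, where (in the notation of Theorem \ref{ConTheo}) $\phi_{ij}(t)=p_{ii}(t)\,(1-x_i^*(t))\,\beta_j\,x_j^*(t)$. I would then pin down the sign of each factor: $\beta_j>0$; $x_i^*(t)<1$ for all $t$, because at $x_i=1$ equation (\ref{NecConDy}) gives $\dot x_i=-\sigma_i<0$, so a trajectory starting below $1$ never reaches it; and $x_j^*(t)>0$ for all $t$, because (\ref{NecConDy}) yields $\dot x_j^*\ge -\sigma_j x_j^*$, hence $x_j^*(t)\ge x_{j0}e^{-\sigma_j t}>0$ (using $x_{j0}>0$, as assumed throughout). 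Combined with item (i) ($p_{ii}(t)>0$ for $t<T$ and $p_{ii}(T)=0$) this gives $\phi_{ij}(t)>0$ for $t<T$ and $\phi_{ij}(T)=0$, and reading (\ref{ConRule}) again yields $u_{ij}^*(t)<w_{ij}^o$ for $t<T$ and $u_{ij}^*(T)=w_{ij}^o$.

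For items (iii)--(iv), I would write out the $i$-th scalar equation in (\ref{CosDy}) using (\ref{CoMat}) and $f_i(x)=\alpha_i x$: $\dot p_{ii}=\Gamma_{ii}p_{ii}-\beta_i\sum_{n\in\mathcal{N}_{i,o}^{in}}(1-x_n^*)u_{ni}^*p_{in}-\alpha_i$ with $\Gamma_{ii}=\sum_{j\in\mathcal{N}_{i,o}^{out}}u_{ij}^*\beta_j x_j^*+\sigma_i$. If $\mathcal{N}_{i,o}^{in}$ is empty, the middle sum disappears and $p_{ii}$ solves $\dot p_{ii}=a(t)p_{ii}-\alpha_i$, $p_{ii}(T)=0$, with $a(t)=\Gamma_{ii}(t)\ge\sigma_i$; solving this backward and using $a(r)\ge\sigma_i$ gives $p_{ii}(t)\le\alpha_i\int_t^T e^{-\sigma_i(s-t)}\,ds\le\alpha_i/\sigma_i$, which is (iii). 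If instead $\mathcal{N}_{i,o}^{out}$ is empty, I would first observe that $p_{ii}$ appears in no equation for $p_{in}$, $n\ne i$: by (\ref{CoMat}), $\Gamma_{ni}\ne 0$ would require $i\in\mathcal{N}_{n,o}^{in}$, i.e. $n\in\mathcal{N}_{i,o}^{out}$, which is impossible, and $\gamma_i$ has no entry in positions $n\ne i$, so $(p_{in})_{n\ne i}$ solves a homogeneous linear ODE with zero terminal data and is identically $0$. Then both remaining terms vanish: $\dot p_{ii}=\sigma_i p_{ii}-\alpha_i$, $p_{ii}(T)=0$, so $p_{ii}(t)=\frac{\alpha_i}{\sigma_i}\big(1-e^{-\sigma_i(T-t)}\big)$ and $\dot p_{ii}(t)=-\alpha_i e^{-\sigma_i(T-t)}<0$ for every $t$, which is (iv).

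The step I expect to be most delicate is the backward positivity argument in item (i) --- in particular the strict positivity of $p_{ii}$ on $[0,T)$ when only monotonicity of $f_i$ is available; once that is settled, the orthant-invariance for the off-diagonal costates in (i) and the scalar reductions in (iii)--(iv) are essentially mechanical. A short but easily missed point is the vanishing of $p_{in}$, $n\ne i$, in case (iv): without it the equation for $p_{ii}$ carries an extra term and its strict monotonicity is not transparent.
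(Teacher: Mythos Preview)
Your argument for items (i)--(iii) is essentially the paper's: backward-in-time orthant invariance of the adjoint system for (i), reading the sign of $\phi_{ij}$ off (i) together with the state bounds $0<x_j^*<1$ for (ii), and the scalar reduction when $\mathcal{N}_{i,o}^{in}=\emptyset$ for (iii). The one place you diverge is item (iv). The paper does not prove that the off-diagonal costates vanish; it simply invokes part (i) ($p_{ij}\ge 0$) and writes
\[
\dot p_{ii}= -\alpha_i-\sum_{j\in\mathcal{N}_{i,o}^{in}}(1-x_j^*)u_{ji}^*\beta_i\,p_{ij},
\]
concluding $\dot p_{ii}<0$. As you implicitly noticed, the correct $i$-th row of (\ref{CosDy}) carries an extra $+\sigma_i p_{ii}$ term (since $\Gamma_{ii}=\sigma_i$ when $\mathcal{N}_{i,o}^{out}=\emptyset$), so the paper's sign conclusion is not immediate as written. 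Your route avoids this entirely: the decoupling observation $\Gamma_{ni}=0$ for all $n\neq i$ forces $(p_{in})_{n\neq i}\equiv 0$, collapsing the equation to $\dot p_{ii}=\sigma_i p_{ii}-\alpha_i$, whose explicit solution gives strict monotonicity and, as a bonus, the closed form $p_{ii}(t)=\frac{\alpha_i}{\sigma_i}\bigl(1-e^{-\sigma_i(T-t)}\bigr)$. The paper's route is shorter in spirit (it only needs the sign information from (i)), but to make it rigorous one must also restore the $\sigma_i p_{ii}$ term and argue $p_{ii}<\alpha_i/\sigma_i$; your approach is self-contained and yields more.
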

Proof: See Appendix \ref{ProofCosTerm}.

Theorem \ref{CosTerm} indicates that during the time interval $[0,T)$, the agents, with an incentive to lower their own costs, adapt their weight accordingly to impede the spreading of virus. After the prescribed alert duration $[0,T]$, a recovery of topology is always on the way to meet the minimum cost. Also, from theorem \ref{CosTerm} (iii), we know for agent $i$ who has no in-neighbors, its out-link $u_{ij}^*(t)$ will never be $0$ if $\alpha_i \beta_j/  \sigma_i \leq g_{ij}' (-w_{ij}^o)$. This can be readily shown by $\phi_{ij}=p_{ii}(1-x_i^*(t))\beta_j x_j^*(t) \leq (\alpha_i/ \sigma_i)(1-x_i^*)\beta_j x_j^*(t) < \alpha_i \beta_j /\sigma_i \leq g_{ij}'(-w_{ij}^o)$.

\section{Inefficiency of Nash Equilibrium}\label{EffCom}

It is well known that the non-cooperative NE in nonzero-sum games is generally inefficient \cite{Dubey1986}. There is need to develop a mechanism to attain a higher social welfare or lower aggregate costs through cooperation behavior \cite{TB2011}. The notion of the price of anarchy has been introduced in \cite{Roughgarden2004} to quantify the inefficiency. In the network, the social cost is the aggregate costs of all players. {Let $\mathbf{u}=\{\mathbf{u}_1,...,\mathbf{u}_N\}$ where $\mathbf{u}_i(t) \in \mathbb{R}^{|\mathcal{N}_{i,o}^{out}|}$ be the weight control variable for the whole network with admissible set $U_o=\{\mathbf{u}: u_{ij}(t)\in[0,\omega_{ij}^o],\forall i\in\mathcal{N},j\in\mathcal{N}_{i,o}^{out},t\in[0,T] \}$.} Denote by $\mathbf{u}^o=\{\mathbf{u}_1^o,...,\mathbf{u}_N^o\}$ the social optimal solution. The social optimum can be attained by solving the optimal control problem: 
\begin{equation}\label{OpCon}
\begin{aligned}
    &\min\limits_{\mathbf{u}\in U_o} J_o=\int\limits_0^T \sum\limits_{i=1}^N f_i(x_i(t)){\color{blue}+}\sum\limits_{i=1}^N \sum\limits_{j\in \mathcal{N}_{i,o}^{out}}g_{ij}(u_{ij}(t)-w^o_{ij})dt\\
    &s.t.\ \dot{x}_i(t)=(1-x_i(t))\sum\limits_{j\in \mathcal{N}_{i,o}^{out}} u_{ij}(t)\beta_j x_j(t)-\sigma_i x_i(t),\\ &x_i(0)=x_{i0},i=1,2,...,N,
\end{aligned}
\end{equation}
where $J_o:\mathbb{R}^{|\mathcal{N}_{1,o}^{out}|}\times \cdots \times \mathbb{R}^{|\mathcal{N}_{N,o}^{out}|} \rightarrow \mathbb{R}$.
An application of maximum principle gives the following: the optimal control $\mathbf{u}^o(t)$ and corresponding trajectory $\mathbf{x}^o(t)$ must satisfy the following so-called canonical equations:
\begin{align}
\label{SoOp1}
\dot{x}_i^o(t)&=(1-x_i^o)\sum\limits_{j\in\mathcal{N}_{i,o}^{out}}u_{ij}^o(t)\beta_j x_j^o-\sigma_i x^o_i(t), {\color{blue}x_i^o(0)=x_{i0}},\\ 
\label{SoOp2}
    \dot\lambda(t)&=\Gamma(t,\mathbf{x}^o,\mathbf{u}^o_1,...,\mathbf{u}^o_N)\lambda(t)+\gamma,\lambda(T)=0,\\ 
\label{SoOp3}
    \mathbf{u}^o(t)&=arg\min\limits_{\mathbf{u}\in U_o} H(t,\mathbf{x}^o(t),\lambda(t),\mathbf{u}(t)),
\end{align}
for all $i\in\mathcal{N}$, where $\Gamma(t)$ is the same with the one given in (\ref{CoMat}) for the dynamics of the costate in the differential game problem and $\gamma(t)=[-f'_1(x_1(t)),-f'_2(x_2(t)),...,-f'_N(x_N)]'$, the Hamiltonian of the optimal control problem is defined as 
\begin{equation}\label{HamOp}
\begin{aligned}
&H(t,\mathbf{x}(t),\lambda(t),\mathbf{u}(t))\\
=&\sum_{i=1}^N f_i(x_i(t)){\color{blue}+}\sum_{i=1}^N \sum_{j\in \mathcal{N}_{i,o}^{out}}g_{ij}(u_{ij}(t)-w^o_{ij}) \\
+& \sum_{i=1}^N \lambda_i(t) \left\{(1-x_i(t))\sum\limits_{j\in \mathcal{N}_{i,o}^{out}} u_{ij}(t)\beta_j x_j(t)-\sigma_i x_i(t)\right\},
\end{aligned}
\end{equation}
and $\lambda(\cdot):[0,T]\rightarrow \mathbb{R}^N$ is the costate function, $\lambda_i$ is its $i$th component. {The Hamiltonian of the optimal control problem (\ref{HamOp}) is different from the individual Hamiltonian defined in (\ref{HamNE}). But they are related. The Hamiltonian of the optimal control problem includes the cost of all agents over the network instead of just individual's cost. Also, the costate $\lambda(t)$ corresponds to the state of all agents $\mathbf{x}(t)$. The counterpart of $\lambda_j(t), j\in\mathcal{N}$ in the individual Hamiltonian defined for the game problem is $p_{ij}(t),j\in\mathcal{N}$. Due to the similar structure of the Hamiltonian of the optimal control problem and the individual Hamiltonians for the game problem, after applying maximum principle, we obtain (\ref{SoOp1}-\ref{SoOp3}) that are in the same structure with (\ref{NecConDy}-\ref{CosDy}).
}

An optimal point can in principle be computed centrally by network operator to achieve social optimum. However, this will require the network operator to {be omniscient} and also not all the agents have incentives to adapt their connection weights based on the rule designed to minimize the aggregate costs. Also, for large-scale {network/system}, centralized solution gives rise to computational problems and implementability issues. So, centralized optimal control solution is impractical. To achieve social optimum in a distributed way, a mechanism needs to be designed on behalf of the network operator. The strategy for the network operator is to set penalties $c_i:=(c_i(t),t\in[0,T])$ so that the cost for player $i$ at time $t$ is $\hat{l}_i(t,\mathbf{x},\mathbf{u}) \triangleq f_i(x_i(t))+\sum_{j\in \mathcal{N}_{i,o}^{out}}g_{ij}(u_{ij}(t)-w^o_{ij})+c_i(t)$. To set a proper penalty for each player, we need to utilize the theory of potential differential games \cite{Morales2018} which is an extension of the potential game concept for static games \cite{Monderer1996}. The following is the definition of potential differential games.

\begin{definition}
A differential game with cost $\hat{J}_i=\int_{0}^T \hat{l}_i(t,\mathbf{x},\mathbf{u})dt$ and dynamics defined by (\ref{EpDy}) is an potential differential game if there exists a function $\pi:\mathcal{X}\times U_1\times \cdots \times U_N \times [0,T] \rightarrow \mathbb{R}$ that satisfies the following condition for every player $i \in\mathcal{N}$
\begin{equation}\label{PotenDef}
\begin{aligned}
    \int_{0}^T &\hat{l}_i(t,x_i,\mathbf{x}_{-i},\mathbf{u}_i,\mathbf{u}_{-i}) -\hat{l}_i(t,\hat{x}_i,\hat{\mathbf{x}}_{-i},\mathbf{v}_i,\mathbf{u}_{-i}) dt\\
    &=  \int_{0}^T \pi(t,x_i,\mathbf{x}_{-i},\mathbf{u}_i,\mathbf{u}_{-i}) -\pi(t,\hat{x}_i,\hat{\mathbf{x}}_{-i},\mathbf{v}_i,\mathbf{u}_{-i}) dt,
\end{aligned}
\end{equation}
for all $\mathbf{u}_i,\mathbf{v}_i \in U_i$, where $\mathbf{x},\hat{\mathbf{x}}\in\mathcal{X}$ are the corresponding states under controls $\{\mathbf{u}_i,\mathbf{u}_{-i}\}$ and $\{\mathbf{v}_i,\mathbf{u}_{-i}\}$ respectively. {Here, $\mathbf{u}_{-i}$ denote the collection of controls of all players except player $i$, i.e., $\mathbf{u}_{-i}=\{\mathbf{u}_1,...,\mathbf{u}_{i-1},\mathbf{u}_{i+1},...,\mathbf{u}_N\}$.
}
\end{definition}

If we can find $c_i(t)$ for every $i\in\mathcal{N}$ such that relation (\ref{PotenDef}) holds for $\pi= \sum_{i=1}^N f_i(x_i(t))-\sum_{i=1}^N \sum_{j\in \mathcal{N}_{i,o}^{out}}g_{ij}(u_{ij}(t)-w^o_{ij})$, then the differential game is a potential game whose open-loop NE solution is equivalent to the open-loop solution of optimal control problem defined by (\ref{OpCon}). The following theorem gives important insights about choosing proper penalties $c_i(t)$.

\begin{theorem}\label{SocNE}
Consider a differential game with penalties where the cost of player $i$ is given by 
\begin{equation}\label{Jhat}
\hat{J}_i=\int_{0}^T \hat{l}_i(t)dt=J_i+\int_{0}^Tc_i(t)dt,
\end{equation}
which is obtained by introducing a penalty term introduced to $J_i$ in (\ref{DifGam2}), and the constraint dynamics is in accordance with the differential game defined by (\ref{DifGam2}). Let $c_i(t)=\sum_{j\neq i}f_j(x_j(t))$. Then, {the differential game with (\ref{Jhat}) is a potential differential game} corresponding to the optimal control problem defined by (\ref{OpCon}), and if $\{ \mathbf{u}^*_i(t),i\in \mathcal{N}\}$ is an open-loop NE solution for new differential game with cost (\ref{Jhat}), and $\{\mathbf{x}^*(t),0\leq t\leq T\}$ is the corresponding state trajectory, the relations (\ref{SoOp1}) (\ref{SoOp2}) and (\ref{SoOp3}) hold for $\mathbf{u}^*$ and $\mathbf{x}^*$ with $\mathbf{u}^o$ replaced by $\{ \mathbf{u}^*_i(t),i\in \mathcal{N}\}$ and $\mathbf{x}^o$ replaced by $\mathbf{x}^*$.
\end{theorem}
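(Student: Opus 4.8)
\emph{Proof outline.} The statement has two parts: that the penalized game is a potential differential game associated with (\ref{OpCon}), and that an open-loop NE of it satisfies the canonical equations (\ref{SoOp1})--(\ref{SoOp3}). The plan is to first establish the potential structure by straightforward bookkeeping of the running costs, and then to rerun the maximum-principle derivation behind Theorem \ref{NecCon} on the penalized game, the key point being that the penalty $c_i=\sum_{j\neq i}f_j$ is engineered precisely so that the $N$ individual costates collapse onto a single social costate. First I would substitute the penalty to obtain $\hat l_i(t,\mathbf{x},\mathbf{u})=\sum_{k=1}^N f_k(x_k(t))+\sum_{j\in\mathcal{N}_{i,o}^{out}}g_{ij}(u_{ij}(t)-w_{ij}^o)$, and take as candidate potential the running cost of (\ref{OpCon}), namely $\pi(t,\mathbf{x},\mathbf{u})=\sum_{k=1}^N f_k(x_k(t))+\sum_{k=1}^N\sum_{j\in\mathcal{N}_{k,o}^{out}}g_{kj}(u_{kj}(t)-w_{kj}^o)$, so that $\int_0^T\pi\,dt=J_o$. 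The only thing to check is that $\hat l_i-\pi=-\sum_{k\neq i}\sum_{j\in\mathcal{N}_{k,o}^{out}}g_{kj}(u_{kj}-w_{kj}^o)$ is a function of $\mathbf{u}_{-i}$ alone --- it involves neither $\mathbf{x}$ nor $\mathbf{u}_i$. Granting this, whenever player $i$ switches from $\mathbf{u}_i$ to $\mathbf{v}_i$, no matter what resulting trajectories $\mathbf{x},\hat{\mathbf{x}}$ arise, $\hat l_i-\pi$ is unchanged, so the difference of the $\hat l_i$'s equals the difference of the $\pi$'s pointwise in $t$; integrating over $[0,T]$ is exactly (\ref{PotenDef}), and the identification of the potential functional with $J_o$ is immediate.

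For the second part I would apply to the penalized game the same open-loop maximum principle that yields Theorem \ref{NecCon}, now with the symmetrized running cost $\hat l_i$. The individual Hamiltonian of player $i$ becomes $\hat H_i=\sum_{k=1}^N f_k(x_k)+\sum_{j\in\mathcal{N}_{i,o}^{out}}g_{ij}(u_{ij}-w_{ij}^o)+\sum_{j=1}^N p_{ij}\{(1-x_j)\sum_{k\in\mathcal{N}_{j,o}^{out}}u_{jk}\beta_k x_k-\sigma_j x_j\}$; its drift part is identical to that in (\ref{HamNE}), so the costate equation is again $\dot{\mathbf{p}}_i=\Gamma(t,\mathbf{x}^*,\mathbf{u}^*)\,\mathbf{p}_i+\gamma_i$ with $\Gamma$ the matrix (\ref{CoMat}). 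The crucial observation is that the state-dependent part of the running cost is now the \emph{common} term $\sum_k f_k(x_k)$, so $\gamma_i=[-f_1'(x_1^*),\dots,-f_N'(x_N^*)]'$ no longer depends on $i$; since $\Gamma$ also depends only on $(\mathbf{x}^*,\mathbf{u}^*)$, every $\mathbf{p}_i$ solves one and the same linear terminal-value problem $\dot{\mathbf{p}}_i=\Gamma\mathbf{p}_i+\gamma$, $\mathbf{p}_i(T)=0$, and hence all the $\mathbf{p}_i$ coincide with a single vector $\lambda$ --- which is exactly (\ref{SoOp2}) with $\mathbf{u}^o,\mathbf{x}^o$ replaced by $\mathbf{u}^*,\mathbf{x}^*$. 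Relation (\ref{NecConDy}) is (\ref{SoOp1}) along the NE trajectory, so only the control condition remains.

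To recover (\ref{SoOp3}) I would note that, as a function of $\mathbf{u}_i$ only, $\hat H_i$ reduces to $\sum_{j\in\mathcal{N}_{i,o}^{out}}\big[g_{ij}(u_{ij}-w_{ij}^o)+p_{ii}(1-x_i)\beta_j x_j\,u_{ij}\big]$ plus $\mathbf{u}_i$-free terms, and with $p_{ii}=\lambda_i$ this is precisely the $\mathbf{u}_i$-block of the social Hamiltonian $H$ in (\ref{HamOp}). Since $H$ is additively separable across the blocks $\mathbf{u}_1,\dots,\mathbf{u}_N$ and $U_o=\prod_i U_i$, the joint minimization $\min_{\mathbf{u}\in U_o}H(t,\mathbf{x}^*,\lambda,\mathbf{u})$ decouples into $N$ per-player minimizations, each solved by $\mathbf{u}_i^*(t)$ via the NE condition (\ref{ConHam}); hence $\mathbf{u}^*(t)$ solves (\ref{SoOp3}), and all of (\ref{SoOp1})--(\ref{SoOp3}) hold for $\mathbf{u}^*,\mathbf{x}^*$.

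The one place the argument has real content --- and the step I expect to be the crux --- is the collapse of the $N$ individual costates to the single social costate $\lambda$; this is bought exactly by the choice $c_i=\sum_{j\neq i}f_j$, which makes the state-dependent part of every player's running cost the common $\sum_k f_k$, and, combined with the additive separability of $H$ in the control blocks, it converts the coupled system of best-response optimality conditions of the game into the single canonical system of the centralized problem. I would also flag that only the first-order necessary conditions are being asserted here, not global social optimality --- consistent with the general fact that an NE of a potential game need only be a person-by-person minimizer of the potential.
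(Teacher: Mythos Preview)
Your proposal is correct and, for the substantive second part (the collapse of the $N$ individual costates onto a single $\lambda$ via identical $\Gamma$ and identical forcing $\gamma$, together with the block-separability of $H$ in the controls), it is exactly the paper's argument, just spelled out in more detail than the paper's terse ``$\hat H_i=H_i+c_i$ and the conditions are aligned.'' The only difference is in how the potential-game claim is handled: you verify the definition (\ref{PotenDef}) directly by observing that $\hat l_i-\pi$ depends on $\mathbf{u}_{-i}$ alone, whereas the paper instead proves the implication ``$\mathbf{u}^o$ optimal for (\ref{OpCon}) $\Rightarrow$ $\mathbf{u}^o$ is an NE of the penalized game'' by adding the same $\mathbf{u}_{-i}$-only constant to both sides of the optimality inequality --- the identical algebraic observation, used in the two complementary directions.
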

Proof: See Appendix \ref{ProofSocNE}.

Theorem \ref{SocNE} indicates that with a proper choice of the penalties $c_i(t),i\in\mathcal{N}$, the necessary conditions for the open-loop NE solution of the differential game is aligned with the necessary conditions for the optimal solution of the social cost optimal control problem. The counterpart of condition (\ref{CosDy}) for the penalty-based differential game can be written as 
\begin{equation}\label{NewCosDy}
    \dot{\mathbf{p}}_i(t)=\Gamma(t)\mathbf{p}_i(t)+\hat{\gamma}_i(t)
\end{equation}
where $j$th component of $\hat{\gamma}_i(t)$ is $\partial(f_i + c_i)/\partial x_j$. In implementation, the system operator sends the penalty function to each agent. If the open-loop NE and the social optimal solution uniquely exist, the open-loop NE achieves the social optimum.

\begin{definition}
A directed graph is strongly connected if it contains {a directed path from $j$ to $i$ for every pair of vertices $\{(i,j):i\in\mathcal{N},j\in \mathcal{N},i\neq j\}$}.
\end{definition}

{In a directed graph, a directed path is a sequence of edges which join a sequence of vertices, but with the restriction that the edges all be directed in the same direction.} What we have developed so far in this section is for cases where the original weighted network is strongly connected. If the original network is not strongly connected, we have the following. 
\begin{definition}
Given a graph, if there {exists a directed path from vertex $j$ to vertex $i$}, we say $i$ is reachable from $j$. Denote by $\mathcal{R}_i \subseteq \mathcal{N}$ the set of vertices that $i$ can be reachable from.
\end{definition}

In graph theory, a single vertex is defined to connect to itself by a trivial path. We have $i\in\mathcal{R}_i$. If the graph is strongly connected, for every $i\in\mathcal{N}$, $\mathcal{R}_i = \mathcal{N}$. Otherwise, for some $i\in\mathcal{N}$, we have $\mathcal{R}_i \subset \mathcal{N}$. Denote $\mathcal{R}_{i,o}$ the counterpart of $\mathcal{R}_i$ for the original graph defined by $(\mathcal{V},\mathcal{E},\mathcal{W}^o)$.

\begin{corollary}\label{SocialSpe}
Consider the differential game with cost functions defined in (\ref{Jhat}) and dynamics given by (\ref{DifGam2}) as in Theorem \ref{SocNE}. Let $c_i(t)=\sum_{j\in\mathcal{R}_{i,o}\backslash \{i\}} f_j(x_j)$. Then, if $\{ \mathbf{u}^*_i(t),i\in \mathcal{N}\}$ is an open-loop NE solution for the new differential game, and $\{\mathbf{x}^*(t),0\leq t\leq T\}$ is the corresponding state trajectory, the relations (\ref{SoOp1}), (\ref{SoOp2}), and (\ref{SoOp3}) hold for $\mathbf{u}^*$ and $\mathbf{x}^*$ with $\mathbf{u}^o$ replaced by $\{ \mathbf{u}^*_i(t),i\in \mathcal{N}\}$ and $\mathbf{x}^o$ replaced by $\mathbf{x}^*$.
\end{corollary}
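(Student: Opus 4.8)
The plan is to follow the proof of Theorem \ref{SocNE} essentially verbatim, the only genuinely new ingredient being a reachability argument that replaces ``strongly connected'' by the weaker structure encoded in $\mathcal{R}_{i,o}$. Observe first that when the original graph is strongly connected one has $\mathcal{R}_{i,o}=\mathcal{N}$ for every $i$, so $c_i=\sum_{j\neq i}f_j(x_j)$ and the statement collapses to Theorem \ref{SocNE}; hence the corollary is the genuine generalization, and the argument should reduce to that one in the strongly connected case.

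Concretely, I would apply Theorem \ref{NecCon} to the penalized game (i.e.\ with $f_i$ replaced by $f_i+c_i$): along an open-loop NE $\{\mathbf{u}_i^*,\mathbf{x}^*\}$ there exist costates $\mathbf{p}_i$ solving $\dot{\mathbf{p}}_i=\Gamma(t)\mathbf{p}_i+\hat{\gamma}_i(t)$, $\mathbf{p}_i(T)=0$, where the $n$-th component of $\hat{\gamma}_i$ is $-\partial(f_i+c_i)/\partial x_n$, which equals $-f_n'(x_n^*)$ for every $n\in\mathcal{R}_{i,o}$ and $0$ for $n\notin\mathcal{R}_{i,o}$. The key observation is that $\mathcal{R}_{i,o}$ is closed under the coupling induced by $\Gamma$: by (\ref{CoMat}) the off-diagonal entry $\Gamma_{mn}$ is nonzero only when $n$ is an in-neighbor of $m$ in the original graph, i.e.\ when there is an edge $n\to m$; if moreover $m\in\mathcal{R}_{i,o}$ (there is a path $m\to\cdots\to i$), then $n$ also reaches $i$, so $n\in\mathcal{R}_{i,o}$. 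Consequently the block of $\dot{\mathbf{p}}_i$ indexed by $\mathcal{R}_{i,o}$ is an autonomous linear terminal-value problem whose coefficient block and forcing term are exactly those of the corresponding block of the social costate $\lambda$ in (\ref{SoOp2}); by uniqueness of solutions of linear ODEs, $p_{in}(t)=\lambda_n(t)$ for all $n\in\mathcal{R}_{i,o}$, and in particular $p_{ii}(t)=\lambda_i(t)$ since $i\in\mathcal{R}_{i,o}$. Finally, Theorem \ref{ConTheo} shows that the NE control $u_{ij}^*$ enters the optimality condition only through $\phi_{ij}=p_{ii}(1-x_i^*)\beta_j x_j^*$; substituting $p_{ii}=\lambda_i$ turns (\ref{ConHam})--(\ref{ConRule}) into precisely the minimization (\ref{SoOp3}), while (\ref{NecConDy}) is (\ref{SoOp1}) and the costate identity gives (\ref{SoOp2}). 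Equivalently one may run the potential-game route of Theorem \ref{SocNE}: the same reachability fact shows that a unilateral deviation of player $i$ perturbs only the states $\{x_n:n\in\mathcal{R}_{i,o}\}$, so the $f$-terms outside $\mathcal{R}_{i,o}$ cancel from both sides of (\ref{PotenDef}) and the game is potential for the same $\pi$, whence its NE solves (\ref{OpCon}).

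The step I expect to be the main obstacle --- indeed essentially the only nontrivial point --- is getting the directionality right: in (\ref{EpDy}) infection propagates against the orientation of the edges, so the influence of $x_i$ on the rest of the network (dually, the sparsity pattern of $\Gamma$ and of the costate coupling) is carried by paths \emph{into} $i$, and the correct penalty set is $\mathcal{R}_{i,o}=\{j:\text{a path }j\to\cdots\to i\text{ exists}\}$ rather than its complement or its forward closure. Once this is pinned down, verifying that $\mathcal{R}_{i,o}$ is closed under predecessors and that $\hat{\gamma}_i$ restricted to $\mathcal{R}_{i,o}$ coincides with $\gamma$ is immediate, and the remainder is a transcription of the proof of Theorem \ref{SocNE}.
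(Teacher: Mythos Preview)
Your proposal is correct and follows the same approach as the paper; indeed the paper's proof is a one-line deferral to Theorem~\ref{SocNE}, with the block-triangular structure of $\Gamma$ on $\mathcal{R}_{i,o}$ illustrated only through a worked example. You have made explicit precisely the reachability-closure argument (that $m\in\mathcal{R}_{i,o}$ and $n\in\mathcal{N}_{m,o}^{in}$ force $n\in\mathcal{R}_{i,o}$) that the paper leaves to the reader, and your orientation check is correct.
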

\begin{proof}
The proof of Corollary \ref{SocialSpe} simply follows from the proof for Theorem \ref{SocNE}.
\end{proof}

To illustrate Corollary \ref{SocialSpe}, we present an example in Appendix \ref{Corr2Examp}

\begin{figure}\centering
\vspace{-3mm}\includegraphics[width=8cm]{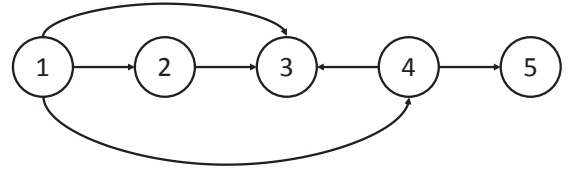}
\caption{A directed graph with $5$ vertices to illustrate the equilibrium solution of a $5$-person differential game over the network.}\label{DAG}
\end{figure}


\section{Algorithms and Case Studies}\label{NumericalStudy}
In this section, we provide the set-up information for the case studies. Besides, based on the equilibrium analysis and the optimal control analysis, an algorithm is proposed to compute the optimal weight adaptation trajectory for the system operator and the agents. 
\subsection{Preliminaries}
In the simulation, the infection cost function $f_i(\cdot)$ is given to be linear in $x_i(t)$, i.e., $f_i(x_i(t))=\alpha_ix_i(t)$. Here, we set $\alpha_i=\alpha,\forall i\in\mathcal{N}$. The weight adaptation cost is taken to be quadratic where $g_{ij}(u_{ij}-w_{ij}^o)=(1/2)d_{ij}(u_{ij}-w_{ij}^o)^2$ for all $i\in\mathcal{N},j\in\mathcal{N}_{i,o}^{out}$. Unless otherwise stated, let $\alpha_i=1, d_{ij}=d=0.2$ for all $i\in\mathcal{N},j\in\mathcal{N}_{i,o}^{out}$. Note that under this setting, assumptions $\ref{AssF}$ and $\ref{AssG}$ hold and $g_{ij}(\cdot)$ is even and convex.

{
\begin{table*}
\begin{center}
	\caption{Parameters used for numerical study unless otherwise stated.}\label{t1}
	\begin{tabular}{@{}ccccccccccc@{}} \toprule
		\multicolumn{11}{c}{\;\;\;\;\;\;\;\;\;\;\;\;\;Cost functions\;\;\;\;\;\;\;\;\;\;\;\;\;\;\;\;\;\;\;\;\;\;\;Network Topology \;\;\;\;\;\;\;\;\;\;\;\;\;\;\;\;\;\;\;\;\;\;\;\;\;\; Spreading} \\ 
		\cmidrule(r){1-4}
		\cmidrule(r){5-8}
		\cmidrule(r){9-11}
		$f_i(x_i)$ &$g_{ij}(u_{ij}-w^o_{ij})$ &$\alpha_i$ &$d_{ij}$ &$N$ &$w_{ij}^o$ &$\langle k^{out} \rangle$ &$\langle k^{in}\rangle$  &$\beta_i$ &$\sigma_i$ &$T$ \\
		\cmidrule(r){1-11}
		$\alpha_ix_i$ &$\frac{1}{2}d_{ij}(u_{ij}-w_{ij}^o)^2$ &$1$ &$0.2$ &$150$ &$1$
		& $7.545$& $7.545$ &$0.04$ &$0.1$ &20  \\
		\bottomrule
	\end{tabular}
\end{center}
\end{table*}}

The original network in the simulation is {a bi-directional scale-free network} with $150$ agents generated based on the Barab\'{a}si-Albert model \cite{Albert2002,Barabasi1999}. {We select this model since many kinds of computer networks}, including the internet and the web graph of the World Wide Web, have scale-free properties. We generate the network by following the growth and preferential attachment properties given in section VII of \cite{Albert2002}. For simplicity, the original weight is set to be $w^o_{ij}=1$ for all edge $(i,j)$. Let $\langle k^{in}\rangle$ $\langle k^{out} \rangle$ be the average in-degree (out-degree) of the network we generated. We have $\langle k^{in}\rangle=\langle k^{out}\rangle=7.545$. 

For simplicity, we take same infection rates and curing rates for all players. Unless otherwise stated, let $\beta_i=\beta=0.04$ and $\sigma_i=\sigma=0.1$. From the result in \cite{Ganesh2005} and the fact that the largest real part of the eigenvalues of matrix $(\mathcal{W}^o B-D)$ is $0.5368$, we say that the virus epidemic will outbreak in the original network. The initial infection level is also set to be the same for all players, $x_{i0}=0.16$ for all $i\in\mathcal{N}$. Table \ref{t1} is a summary of the setups.

\subsection{Computational Algorithm}

{\tiny
\begin{algorithm}[tb]
   \caption{DVR}
   \label{DVR}
\begin{algorithmic}\footnotesize
\STATE {\bfseries Input:} $x_{i0},i\in\mathcal{N}$; {$\beta_i,\sigma_i,i\in\mathcal{N}$; $f_i(\cdot),g_{ij}(\cdot),i\in\mathcal{N},j\in \mathcal{N}_{i,o}^{out}$; $\mathcal{W}^o$; $\epsilon$.} 

\STATE {\bfseries Step 1:} Each {$i\in\mathcal{N}$ selects $\mathbf{u}_i(t) \in U_i$ arbitrarily.}

\STATE {\bfseries Step 2:} Forward part: obtain $x_i(t)$ from $x_{i0}$ and (\ref{NecConDy}) for each $i\in\mathcal{N}$.

\IF{solving social problem (\ref{OpCon})}
\STATE {\bfseries Step 3:} Backward part: obtain $\lambda(t)$ from $ \dot{\mathbf{p}}_i(t)=\Gamma(t)\mathbf{p}_i(t)+\hat{\gamma}_i(t)$ and $\mathbf{p}(T)=0$.
\ELSE
\STATE {\bfseries Step 3$'$:} Backward part: Obtain $\mathbf{p}_i(t),i\in\mathcal{N}$ from $\dot{\mathbf{p}}_i(t)=\Gamma(t)\mathbf{p}_i(t)+\gamma_i(t)$ and $\mathbf{p}_i(T)=0$.
\ENDIF
\STATE {\bfseries Step 4:} Control update: Update control $\hat{\mathbf{u}}_i(t)$ based on (\ref{ConRule}).
\STATE {\bfseries Step 5:} Control policy check:
\IF {$\left\| {\hat{\mathbf{u}}-\mathbf{u}} \right\|_\infty \geq \epsilon$} 
    \STATE Set $\mathbf{u}= \hat{\mathbf{u}}$. Return to \textbf{Step 2}.
\ELSE
    \STATE Set $\mathbf{u}^*_i=\mathbf{\hat{u}}_i$. $\mathbf{u}^*_i$ is the optimal weight adaptation scheme for agent $i$.
\ENDIF
\end{algorithmic}
\end{algorithm}
}

Note that we aim to propose an implementable distributed virus resistance algorithm (DVR algorithm). Based on the algorithm proposed in \cite{David1972} for computation of open-loop NE for nonzero-sum differential games, we present, in algorithm \ref{DVR}, the DVR algorithm to compute the candidate open-loop NE solutions for the differential game described by (\ref{DifGam2}) and the penalty-based differential game defined in Theorem \ref{SocNE}. The solution of the penalty-based differential game is inline with the solution of the optimal control problem defined in (\ref{OpCon}).

{Initially, the input data includes initial infection data: $x_{i0}$ for all $i\in\mathcal{N}$; infection rate $\beta_i$, recovery rate $\sigma_i$  for all $i\in\mathcal{N}$; the original topology $\mathcal{W}^o$; the cost functions $f_i(\cdot),g_{ij}(\cdot)$ for all $i\in\mathcal{N},j\in \mathcal{N}_{i,o}^{out}$ and a stopping value $\epsilon$ to stop the algorithm.} In the first step, each player arbitrarily selects {a continuous control trajectory within the admissible control set} $U_i$ for every out-link it has: $u_{ij}(t)$ for each $i\in\mathcal{N}$, for all $j\in\mathcal{N}_{i,o}^{out}$, and reports the weight adaptation scheme $\mathbf{u}_i(t)$ to the network operator. In step $2$, each player utilizes the initial infection data $x_{i0},i\in\mathcal{N}$ and the control policy $\mathbf{u}_i,i\in\mathcal{N}$, solve (\ref{NecConDy}) forward in time to obtain $x_i, i\in\mathcal{N}$ and report it to the network operator. If the system aims to achieve the social optimal control problem, then the algorithm goes into step $3$. Otherwise, the algorithm steps into step $3'$. In step $3$, the system operator utilizes the reported $\mathbf{u}_i$ and $x_i,i\in\mathcal{N}$, the infection damage cost $f_i(\cdot),i\in\mathcal{N}$ to compute $\mathbf{p}(t)$ backward based on (\ref{NewCosDy}) and sends $\mathbf{p}_i(t)$ back to the corresponding player $i$. In step $3'$, the system operator utilizes the reported $\mathbf{u}_i$ and $x_i,i\in\mathcal{N}$, the infection damage cost $f_i(\cdot),i\in\mathcal{N}$, computes $\mathbf{p}_i(t),i\in\mathcal{N}$ backward based on (\ref{CosDy}) and sends $\mathbf{p}_i(t)$ back to the corresponding player $i$. In the next step, each player updates its control based on (\ref{ConRule}) which only requires its out-neighbors infection information and reports the updated control policy to the network operator. Denote by $\hat{\mathbf{u}}_i(t)$ the updated control policy. If $\Vert {\hat{\mathbf{u}}-\mathbf{u}} \Vert_\infty \geq \epsilon$, the algorithm moves back to step $2$. Otherwise, the latest updated policy $\mathbf{\hat{u}}_i$ is the optimal control policy for agent $i$.


\subsection{Numerical Results}

In this subsection, we present the numerical results. First, we show the dynamics of the costate function for all players. Then, we show the evolution of the weight adaptation, the infection and the costate of selected agents to see individuals' behaviors. Second, we give the comparisons between the optimal control based-weight adaptation scheme (this scheme is equivalent to the penalized differential game based-weight adaptation scheme) and the differential game-based weight adaptation scheme. The optimal control based adaptation scheme is from solving optimal control problem (\ref{OpCon}). The two schemes together with the case of weight adaptation. are compared in terms of the total cost and the infection level of the whole network.

From $(\ref{ConRule})$ and $\phi_{ij}(t):=p_{ii}(t)(1-x_i^*(t))\beta_j x_j^*(t)$, we know that the weight adaptation of player $i$ is based on its own infection, its out-neighbors, and the costate component $p_{ii}$. {The infection of player $i$ and its neighbors are just local information.} From (\ref{CosDy}) and (\ref{CoMat}), {we can see the effect of the whole network's situation is conveyed by costate component $p_{ii}$ to the weight adaptation strategy of player $i$.} Thus, we investigate the dynamics of $p_{ii}$ in Fig. \ref{CostateDynamics}, where the costate component $p_{ii}$'s dynamics for all agents are plotted. As we can see, $p_{ii}(t)$ is positive for all $i\in\mathcal{N}$ during the whole time interval which corroborates Theorem \ref{CosTerm}. For most of the players, the value of $p_{ii}$ is high at the very beginning and then decreases to $0$. One interpretation is that players are more sensitive at the beginning to their out-neighbors infection and tend to cut their weights more heavily.

To see individual behaviors and states, we rank the agents based on their out-degrees. Agent $1$ has the largest out-degree. From the first plot of Fig. \ref{IndWei}, agent $1$ is more likely to be infected due to its large degree. We can see that all weights equal $1$ at and only at $t=T=20$, which corroborates Theorem \ref{CosTerm}. The weight $u_{150,1}(t)$ is reduced to $0$ for some time. This phenomenon occurs because the costate and its out-neighbors' infection levels are high during that time period. The third plot shows that agents with higher out-degrees reduce less weight. Usually, one suppose to cut more weights on highly connected nodes to slow the infection propagation. However, the obtained weight adaptation scheme in this paper is a result of considering both the infection and the loss of efficiency of the network agents. There is a trade-off between maintaining the network's performance and lowering the infection. So, the agents with higher out-degrees may cut less weight to maintain the performance/function.
\begin{figure}\centering
\includegraphics[width=7.6cm]{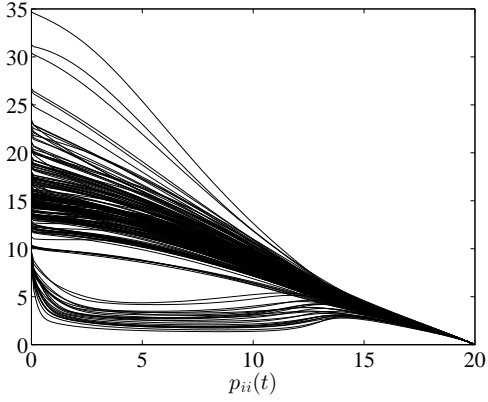}
\caption{The dynamics of costate $p_{ii}(t)$ for all players $i\in \mathcal{N}$.} \label{CostateDynamics}
\end{figure}
\begin{figure}\centering
\includegraphics[width=8.8cm]{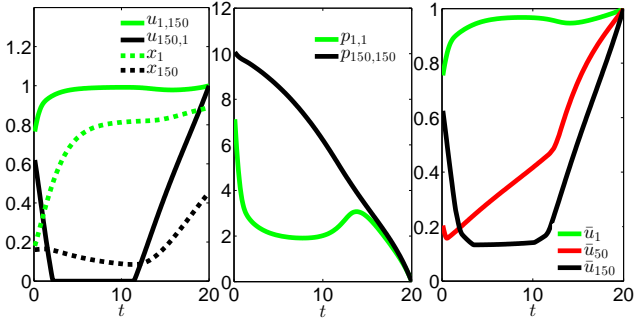}
\caption{The weight adaptation, the infection and the costate of agents $1$, $50$ and $150$ over time. The first plot shows the weight adaptation of edge $(1,150)$ and edge $(150,1)$ and the infection of agent $1$ and $150$. The second plot of Fig. \ref{IndWei} shows the costate dynamics of the two agents. The third gives the average weight adaptation of agents $1$, $50$ and $100$ where $\bar{u}_i=\sum_{j\in\mathcal{N}_{i,o}^{out}} u_{ij}/|\mathcal{N}_{i,o}^{out}|$.} \label{IndWei}
\end{figure}

{
\begin{figure}\centering
\includegraphics[width=8.0cm]{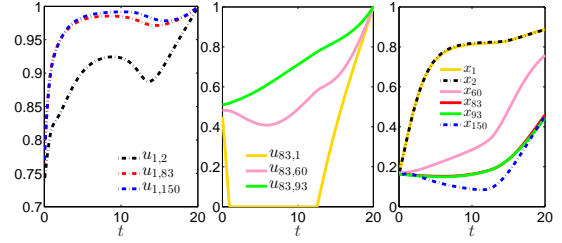}
\caption{The weight adaptation of links between fixed source agent and its different neighbors. The weight adaptation of links connecting from agent $1$ to its out-neighbors, agents $2$, $83$ and $150$ is plotted in the first figure. In the second figure, the weight adaptation of links connecting from agent $83$ to its out-neighbors, agents $1$, $60$ and $93$. The infection level of all agents involved is plotted in the last figure.} \label{SameNodeDifLink}
\end{figure}
}

\begin{figure}\centering
\includegraphics[width=8.7cm]{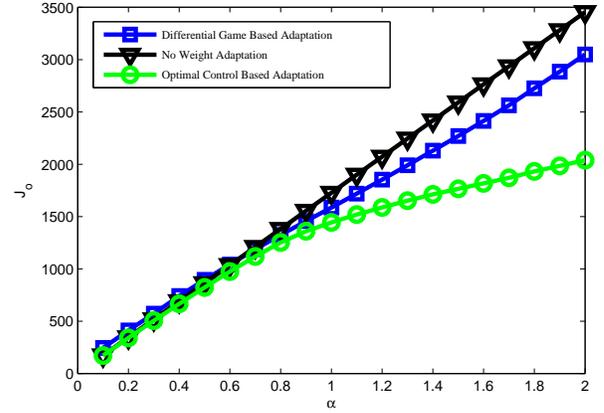}
\caption{The total cost $J_o$ versus different infection cost functions $k$ under the differential game-based weight adaptation scheme, optimal control based weight adaption scheme and no weight adaptation scheme.} \label{alphaToJo}
\end{figure}

\begin{figure}\centering
\includegraphics[width=0.5\textwidth]{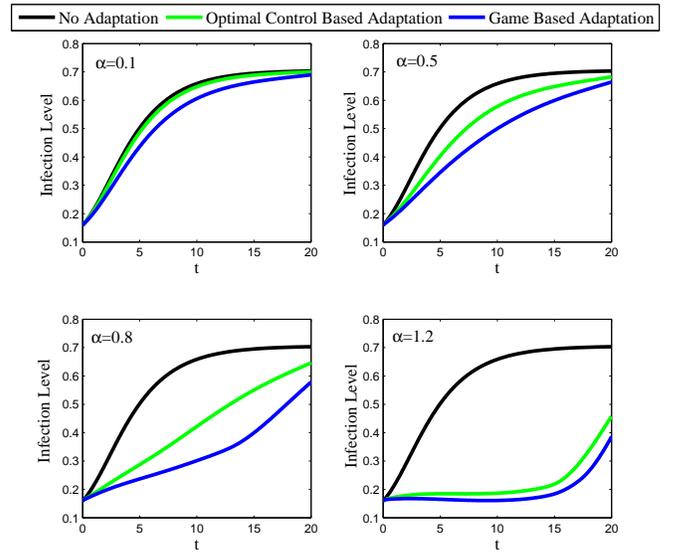}
\caption{The dynamics of the whole network's infection under differential game-based weight adaptation scheme, optimal control based weight adaption scheme, and no weight adaptation scheme.} \label{Spreading}
\end{figure}

{To show that each agent has heterogeneous weight adaptation to different neighbors, we present Fig. {\ref{SameNodeDifLink}}.
We can know from Fig. \ref{SameNodeDifLink} that agent $1$ adapts weights with his/her out-neighbors accordingly based on the evolution of the infection levels of his/her out-neighbors. As we can see, agent $1$ cuts more weight on neighbors with higher infection levels. For example, the infection level of agent $2$ is higher than agent $150$ all the time. Thus, weight $u_{1,2}$ is lower than $u_{1,150}$. Also, agent $83$ reduces its weight on agent $1$ to zero due to the latter's high infection level while its weight on agent $93$ remains above $0.5$.}

Here, we compare the NE-based weight adaptation scheme, the optimal control-based weight adaptation scheme, and no weight adaptation scheme. In Fig. \ref{alphaToJo}, we plot the total cost $J_o$ under the three schemes for different $\alpha$. We observe that no adaptation scheme cause the most total cost. For different values of $\alpha$, the NE-based scheme always incurs a higher cost than the optimal control-based scheme, which indicates the inefficiency of the NE solution. From the plot, we see that a higher $\alpha$ causes more inefficiency.

Fig. \ref{Spreading} is presented to show the virus-resistance of the proposed schemes. The black line shows the infection level for the case with no adaptation scheme, {the blue line shows the case with the game-based scheme, and the green line shows the case with the optimal control-based scheme.} Even though the game-based scheme is inefficient in terms of minimizing the total cost, it outperforms the optimal control-based scheme since the infection level under the game-based scheme is always lower than the infection level under the optimal control based scheme. {No matter in what case, the scheme we have proposed has proven to be virus-resistant and generated a lower total cost than the scheme without adaptation did.}

\section{Conclusion and Future Work}\label{Conc}
In this paper, we have established a differential game framework to develop decentralized virus-resistant mechanisms over complex networks. We have shown that weight adaptation policies allow nodes to change weights to mitigate their infection. {The differential game approach has captured the strategic and dynamic behaviors of a large number of self-interested agents} over time-varying networks. Each player adapts its weight based on its own infection and its out-neighbors infection. It has been observed that the higher levels of its out-neighbors' infection lead to lower weights. The effect of non-local behaviors on the adaptation strategy has been encoded in the costate function. We have discussed the inefficiency of the open-loop Nash equilibrium and {have proposed a penalty-based mechanism to achieve efficiency by imposing local costs induced by reachable nodes.} The differential game framework has enabled the design and implementation of a distributed algorithm over large-scale networks to control the macroscopic behaviors of the virus spreading over networks. Numerical examples have been used to illustrate the virus-resistance of the proposed scheme and the inefficiency of the Nash equilibrium. The differential game approach achieves a better performance than its centralized counterpart in terms of the mitigation of virus spreading. One future direction for this work would be to study the steady behavior of long-term virus-resistance scheme where the duration of virus spreading is sufficiently long.


%
\appendices

\section{Lemmas}

\begin{lemma}\label{Lip}
The dynamics equation $G(\mathbf{x}(t),\mathbf{W}(t))$ is uniformly Lipschitz in $\mathbf{x}$ and $\mathbf{w}_i$ for each $i\in\mathcal{N}$.
\end{lemma}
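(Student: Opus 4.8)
The plan is to verify the Lipschitz property directly from the explicit form of $G$. Recall that
\[
G(\mathbf{x}(t),\mathbf{W}(t))=(W B - D)\mathbf{x} - X_d W B\mathbf{x},
\]
where the only genuinely nonlinear term is the quadratic coupling $X_d W B\mathbf{x}$; componentwise this is $x_i\sum_j w_{ij}\beta_j x_j$. First I would fix the domain: by construction the state lives in the compact set $\mathcal{X}=[0,1]^N$ and each control component satisfies $0\le w_{ij}\le\bar w_{ij}$, so $\|\mathbf{x}\|\le\sqrt N$, $\|W\|$ is bounded by a constant depending only on the $\bar w_{ij}$, and $B$, $D$ are fixed diagonal matrices. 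On such a bounded region a $C^1$ (indeed polynomial) map is automatically Lipschitz, but I would make the constant explicit to justify the ``uniformly'' claim.

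The key steps, in order: (i) Write $G(\mathbf{x},\mathbf{W})-G(\mathbf{y},\mathbf{W})$ for two states $\mathbf{x},\mathbf{y}\in\mathcal{X}$ with the same control, and split into the linear part $(WB-D)(\mathbf{x}-\mathbf{y})$ and the difference of quadratic terms $X_d WB\mathbf{x}-Y_d WB\mathbf{y}$. (ii) For the linear part bound the operator norm $\|WB-D\|$ by a constant $L_1$ depending only on $\bar w_{ij},\beta_i,\sigma_i$. (iii) For the quadratic part add and subtract $X_d WB\mathbf{y}$ to get $X_d WB(\mathbf{x}-\mathbf{y})+(X_d-Y_d)WB\mathbf{y}$; bound $\|X_d\|\le 1$ and $\|WB\mathbf{y}\|\le L_2$ on $\mathcal{X}$, and observe $\|X_d-Y_d\|=\|\mathbf{x}-\mathbf{y}\|_\infty\le\|\mathbf{x}-\mathbf{y}\|$, yielding a bound $(\|WB\|+L_2)\|\mathbf{x}-\mathbf{y}\|$. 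Summing gives a constant $L_x$ independent of $t$ and of the particular admissible control, which is the Lipschitz constant in $\mathbf{x}$. (iv) For Lipschitz dependence on $\mathbf{w}_i$, note $G$ depends on $w_{ij}$ only through the term $(1-x_i)\sum_j w_{ij}\beta_j x_j$ in the $i$-th component; its gradient with respect to $\mathbf{w}_i$ is $(1-x_i)(\beta_1 x_1,\dots,\beta_N x_N)'$, whose norm is bounded on $\mathcal{X}$ by a constant $L_w$ depending only on the $\beta_i$. Hence $\|G(\mathbf{x},\mathbf{w}_i,\mathbf{w}_{-i})-G(\mathbf{x},\mathbf{w}_i',\mathbf{w}_{-i})\|\le L_w\|\mathbf{w}_i-\mathbf{w}_i'\|$, again uniformly in $t$.

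I do not expect a serious obstacle here; the statement is essentially a bookkeeping exercise exploiting that everything is polynomial on a compact set. The only point requiring a little care is making precise that the constants are ``uniform'' — i.e., that they do not depend on $t$ or on the choice of admissible control trajectory — which follows because all bounds are taken over the fixed compact sets $\mathcal{X}$ and $\prod_{i,j}[0,\bar w_{ij}]$. A minor subtlety worth a sentence is that one should state the bounds in terms of a single norm (say the Euclidean norm on $\mathbb{R}^N$) and use norm equivalence when passing between $\|\cdot\|_\infty$ for the diagonal matrix $X_d-Y_d$ and $\|\cdot\|_2$ for the vectors.
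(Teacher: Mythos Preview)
Your proposal is correct and follows essentially the same approach as the paper: both fix the state in the compact box $[0,1]^N$ and the weights in $\prod_{i,j}[0,\bar w_{ij}]$, split $G(\mathbf{x},W)-G(\mathbf{y},W)$ into the linear piece and the bilinear piece, and handle the bilinear term by the add--subtract trick $X_dWB\mathbf{x}-Y_dWB\mathbf{y}=X_dWB(\mathbf{x}-\mathbf{y})+(X_d-Y_d)WB\mathbf{y}$ together with $\|X_d-Y_d\|$ controlled by $\|\mathbf{x}-\mathbf{y}\|$. The only cosmetic differences are that the paper works in the $\ell_\infty$ norm (so no norm-equivalence remark is needed) and groups the terms slightly differently to pull out a factor $(I_N-X_d)WB$, arriving at the explicit constant $\max\{2\beta_M,\delta_M\}(1+\|W\|_\infty)$; for the $\mathbf{w}_i$ part the paper simply says the argument is analogous, which your gradient bound makes explicit.
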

\begin{proof}
From (\ref{dynamic2}), we have
\begin{equation}\label{LipEq}
\begin{aligned}
&{\Vert {G(\mathbf{x}(t),W(t)) - G(\hat{\mathbf{x}}(t),W(t))} \Vert_\infty }\\
{\rm{ = }}&\Vert (W(t)B - D)\mathbf{x}(t) - (W(t)B - D)\hat{\mathbf{x}}(t)\\
&- X_d(t)W(t)B\mathbf{x}(t) + \hat{X}_d(t)W(t)B\hat{\mathbf{x}}(t) \Vert_\infty \\
\le &{\Vert {({I_N} - X_d(t))W(t)B(\mathbf{x}(t) - \hat{\mathbf{x}}(t))} \Vert_\infty } \\&+ {\Vert {D(\mathbf{x}(t) - \hat{\mathbf{x}}(t))} \Vert_\infty }+ {\Vert {(X_d(t) - \hat{X}_d(t))W(t)B\mathbf{x}(t)} \Vert_\infty }\\
\le &2{\beta _M}{\Vert {W(t)} \Vert_\infty }{\Vert {\mathbf{x}(t) - \hat{\mathbf{x}}(t))} \Vert_\infty } + {\delta _M}{\Vert {\mathbf{x}(t) - \hat{\mathbf{x}}(t))} \Vert_\infty }\\
 \le &\max \{ 2{\beta _M},{\delta _M}\} {\Vert {\mathbf{x}(t) - \hat{\mathbf{x}}(t))} \Vert_\infty }(1 + {\Vert {W(t)} \Vert_\infty }),
\end{aligned}
\end{equation}
{where ${\beta _M} \coloneqq {\max _i}{\beta _i}$ and ${\delta _M}$ $\coloneqq$ ${\max _i}{\delta _i}$, and} $I_N\coloneqq\textrm{diag}(1,1,\cdots,1)$. Since $w_{ij}$ is bounded by $\bar{w}_{ij}$, $(1 + {\left\| {W(t)} \right\|_\infty })$ is bounded. So, $G$ is uniformly Lipschitz in $\mathbf{x}$. The proof for $G$ is uniformly Lipschitz in $\mathbf{w}_i$ for all $i\in\mathcal{N}$  can be obtained by following the similar steps.
\end{proof}

\begin{lemma}\label{NotDis}
Let $x_i$, $i\in \mathcal{N}$ be the corresponding solution to the ODEs in (\ref{EpDy}). For all $i \in \mathcal{N}$, given $0<x_i(0)\leq 1$, $x_i(t)\in(0,1)$ holds for all $t\in(0,\infty)$.
\end{lemma}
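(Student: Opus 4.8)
The plan is to show first that the closed cube $[0,1]^N$ is forward invariant for the dynamics (\ref{EpDy}), and then to upgrade this to strict membership in the open cube $(0,1)^N$ for $t>0$ by means of one-sided differential inequalities. Throughout I write the $i$-th equation as $\dot x_i(t)=(1-x_i(t))h_i(t)-\sigma_i x_i(t)$ with $h_i(t):=\sum_{j=1}^N w_{ij}(t)\beta_j x_j(t)$, and recall $0\le w_{ij}(t)\le\bar{w}_{ij}$, $\beta_j\ge 0$ and $\sigma_i>0$. By Lemma \ref{Lip} the right-hand side of (\ref{EpDy}) is Lipschitz in $\mathbf{x}$, so along any continuous weight trajectory the initial value problem has a unique solution on a maximal interval $[0,\tau)$; global existence ($\tau=\infty$) will drop out of the invariance argument.

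For the closed-cube invariance I would use a first-exit-time argument. Let $\tau^\star:=\sup\{\,t\in[0,\tau):\ x_i(s)\in[0,1]\text{ for all }i\in\mathcal{N}\text{ and all }s\in[0,t]\,\}$; since $x_i(0)\in(0,1]$ for every $i$, continuity gives $\tau^\star>0$. Suppose, for contradiction, that $\tau^\star<\tau$. Then $\mathbf{x}(\tau^\star)$ lies on the boundary of $[0,1]^N$, so $x_k(\tau^\star)\in\{0,1\}$ for some $k$, while $\mathbf{x}(s)\in[0,1]^N$ throughout $[0,\tau^\star]$. On that interval $h_k(s)\ge 0$, hence $\dot x_k\ge-\sigma_k x_k$, so $x_k(\tau^\star)\ge x_k(0)e^{-\sigma_k\tau^\star}>0$; this excludes $x_k(\tau^\star)=0$. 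For the case $x_k(\tau^\star)=1$, set $z_k:=1-x_k$, so that $\dot z_k=\sigma_k-z_k(\sigma_k+h_k)$ with $0\le \sigma_k+h_k\le \bar{M}_k:=\sigma_k+\sum_{j=1}^N\bar{w}_{kj}\beta_j$ on $[0,\tau^\star]$ (here I use $x_j\le 1$, which holds on this interval). Then $\dot z_k\ge\sigma_k-\bar{M}_k z_k$, and comparison with the affine equation $\dot w=\sigma_k-\bar{M}_k w$, $w(0)=z_k(0)\ge 0$, yields $z_k(\tau^\star)\ge(\sigma_k/\bar{M}_k)\bigl(1-e^{-\bar{M}_k\tau^\star}\bigr)>0$, contradicting $z_k(\tau^\star)=0$. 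Hence $\tau^\star=\tau$ and $\mathbf{x}(t)\in[0,1]^N$ for all $t\in[0,\tau)$; since the trajectory stays in a compact set it cannot escape in finite time, so $\tau=\infty$.

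It then remains to sharpen the two inequalities, now valid for all $t\ge 0$. From $\dot x_i\ge-\sigma_i x_i$ I obtain $x_i(t)\ge x_i(0)e^{-\sigma_i t}>0$ for every $t\ge 0$, using $x_i(0)>0$. From $\dot z_i\ge\sigma_i-\bar{M}_i z_i$ and the same comparison I obtain $z_i(t)\ge(\sigma_i/\bar{M}_i)\bigl(1-e^{-\bar{M}_i t}\bigr)$, which is strictly positive for every $t>0$, i.e. $x_i(t)<1$. Together these give $x_i(t)\in(0,1)$ for all $t\in(0,\infty)$, as claimed.

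The only genuinely delicate point is the apparent circularity in the invariance step: the bound $\sigma_k+h_k\le\bar{M}_k$ requires $\mathbf{x}\le\mathbf{1}$ componentwise, which is part of what is being proved. The first-exit-time device is exactly what dissolves this, since the bound is only ever used on the interval $[0,\tau^\star]$ where the constraint is known to hold. (An alternative is to invoke Nagumo's invariance theorem, checking that on each face $\{x_k=0\}$ and $\{x_k=1\}$ the vector field points into $[0,1]^N$; the Lipschitz hypothesis of Lemma \ref{Lip} supplies the needed regularity.) A secondary subtlety is the boundary initial datum $x_i(0)=1$, where the lower bound on $z_i$ genuinely requires $\sigma_i>0$ — consistent with the standing SIS assumption that infected nodes recover at a positive rate; if $\sigma_i=0$, a fully infected node would stay at $1$ and the statement would fail.
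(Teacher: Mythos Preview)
Your proof is correct and shares the paper's skeleton---a first-hitting-time argument combined with differential inequalities---but differs in a few respects worth noting. For the lower bound you and the paper do exactly the same thing: $\dot x_i\ge-\sigma_i x_i$ gives $x_i(t)\ge x_i(0)e^{-\sigma_i t}>0$. For the upper bound, however, the paper argues more tersely: if $x_i(t_1)=1$ for the first time, then $\dot x_i(t_1^-)\ge 0$ by minimality of $t_1$, while the equation forces $\dot x_i(t_1)=-\sigma_i<0$, a contradiction. Your route via the substitution $z_i=1-x_i$ and the affine comparison $\dot z_i\ge\sigma_i-\bar M_i z_i$ is slightly heavier but buys you an explicit quantitative lower bound on $1-x_i(t)$, and it handles the boundary case $x_i(0)=1$ and the strict inequality for $t>0$ in one stroke, whereas the paper treats $x_i(0)=1$ separately and somewhat informally (``once $x_i(t)$ reaches $1$, it cannot stay there''). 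You are also more careful than the paper on two points: you run the first-exit argument over the whole cube $[0,1]^N$ at once, which legitimizes the use of $h_i\ge 0$ and $h_i\le\bar M_i-\sigma_i$ (the paper applies $\dot x_i\ge-\sigma_i x_i$ componentwise without saying why the other $x_j$ stay nonnegative on the relevant interval); and you extract global existence from the compact-invariance, which the paper leaves implicit.
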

\begin{proof}

The proof follows Lemma 1 of \cite{Pare2017}. It is clear that $x_i(t)$ is a continuous function of time. When $x_i(0)=1$, then from (\ref{EpDy}), we have $\dot{x}_i(0)<0$, which means once $x_i(t)$ reaches $1$, it cannot stay there. On the other hand, when $x_i(0)\in(0,1)$, the solution $x_i(t)$ would always lie in $(0,1)$. Otherwise, suppose that there exists $t_1$ such that $x_i(t_1)=0$ or $x_i(t_1)=1$. In the first case, note that $\dot{x}_i(t)\geq - \sigma_i x_i(t)$ holds for the time interval $(0,t_1]$, which gives $x_i(t_1)\geq x_i(0) e^{-\sigma_it_1} >0$. It yields a contradiction. In the second case where $x_i(t_1)=1$, we have $x_i(t)<1$ over time interval $[0,t_1)$. So, we have $\dot{x}_i(t_1^-)\geq 0$ which contradicts the fact obtained from (\ref{EpDy}) that $\dot{x}_i(t_1)<0$.
\end{proof}

\section{proof}

\subsection{Proof of Observation \ref{CutDownSpace}}\label{ProofCutDownSpace}
\begin{proof}
Given the original weight pattern $\mathcal{W}^o$, suppose that $w_{ij}^o=0$, i.e., $j\notin \mathcal{N}_{i,o}^{out}$ and $w^*_{ij}(t)>0$. Obviously, player $i$ can lower his own cost by deviating $w_{ij}(t)$ from $w^*_{ij}(t)$ to $0$, which contradicts the fact that $w^*_{ij}(t)$ is the open-loop NE. A similar statement can be made for the case when $w^*_{ij}>w_{ij}^o$.
\end{proof}

\subsection{Proof of Theorem \ref{NecCon}}\label{ProofNecCon}

\begin{proof}
Based on Theorem 6.11 in \cite{TB1999}, conditions (\ref{NecConDy}) and (\ref{ConHam}) are directly derived. To obtain (\ref{CosDy}), we have
\begin{equation}\label{GetCost}
\begin{aligned}
\dot{p}_{ii}(t)=&-\frac{\partial H_i}{\partial x_i}=-f'_i(x^*_i(t))\\
&+p_{ii}\left\{\sum\limits_{j\in\mathcal{N}_{i,o}^{out}}u^*_{ij}(t)\beta_jx^*_j(t)+\sigma_i \right\}\\
&-\sum\limits_{j\in \mathcal{N}_{i,o}^{in}}p_{ij}(t)(1-x^*_j(t))u^*_{ji}(t)\beta_i,\\
\dot{p}_{ij}(t)=&-\frac{\partial H_i}{\partial x_j}=p_{ij}\left\{\sum\limits_{k\in\mathcal{N}_{j,o}^{out}}u^*_{jk}(t)\beta_kx^*_k(t)+\sigma_j \right\}\\
&-\sum\limits_{k\in \mathcal{N}_{j,o}^{in}}p_{ik}(t)(1-x^*_k(t))u^*_{kj}\beta_j.
\end{aligned}
\end{equation}
Reformulating (\ref{GetCost}), we obtain condition (\ref{CosDy}). As the terminal cost is unspecified and the final state is free, we have the transversality condition $\mathbf{p}_i(T)=0$.
\end{proof}

\subsection{Proof of Theorem \ref{ConTheo}} \label{ProofConTheo}

\begin{proof}
{By Assumption \ref{AssG} and the fact that $g_{ij},i,j\in\mathcal{N}$ is convex, we know that the Hamiltonian $H_i$ is differentiable and convex on $u_{ij}$ for every $j$. Also, the admissible control set is convex. Thus, the solution of (\ref{ConHam}) can be obtained by letting $\partial H_i(u_{ij})/ \partial u_{ij}=0$, i.e.,
$$
\begin{aligned}
\frac{\partial H_i(u_{ij}(t))}{\partial u_{ij}(t)}&=g'_{ij}(u_ij(t)-\omega_{ij}^o) + p_{ii}(t)(1-x_i(t))\beta_j x_j(t)\\
&=g'_{ij}(u_ij(t)-\omega_{ij}^o) + \phi_{ii}(t)\\
&=0.\\
\end{aligned}
$$
Suppose ${\partial H_i(\tilde{u}_{ij}(t))}/{\partial u_{ij}(t)}=0$. If $\tilde{u}_{ij}(t) \in [0,\omega_{ij}^o]$ which happens if and only if $g'_{ij}(-w_{ij}^o)<-\phi_{ij}(t)< g'_{ij}(0)$, then according to (\ref{ConHam}), $u^*_{ij}(t)=\tilde{u}_{ij}(t)=(g'_{ij})^{-1}(-\phi_{ij}(t))$. Otherwise, if $\tilde{u}_{ij}(t)<0$, $u^*_{ij}(t)=0$ while if $\tilde{u}_{ij}(t)>\omega_{ij}^o$, $u^*_{ij}(t)=\omega_{ij}^o$. Thus, we have the optimal control rule in the form of (\ref{ConRule}).}
\end{proof}

\subsection{Proof of Theorem \ref{CosTerm} } \label{ProofCosTerm}

\begin{proof}
For proof of (i), note the fact that if $h(x)$ is a continuous and piece-wise differential function over $[a,b]$ such that $h(a)=h(b)$ while $h(x)\neq h(a)$ for all $x$ in $(a,b)$, $(dh/dx)(a^+)$ and $(dh/dx)(b^-)$ cannot be negative simultaneously. From (\ref{CosDy}), we have $\mathbf{p}_i(T)=0$, which gives $\dot{p}_{ii}(T^-)=\lim_{t\rightarrow T^-}\dot{p}_{ii}(t)=-f'_i(x_i(T))<0$. Hence, there exists $\epsilon_i>0$ such that $p_{ii}>0$ and $p_{ij}\geq 0$ for $j\in\mathcal{N}/\{i\}$ over $[T-\epsilon_i,T]$. Suppose that one of the costate component $p_{ij}$ violates the inequality first at $t_a<T$, i.e., we have $p_{ij}(t)\geq 0$ for $t\geq t_a$, we obtain $\dot{p}_{ij}(t_a^+)\leq 0$ which is not feasible. If it is $p_{ii}$ that first violates the inequality at time $t_a$, we have $\dot{p}_{ii}(t_a^+)=-f'_i(x^*_i(t))-\sum_{j\in \mathcal{N}_{i,o}^{in}}p_{ij}(t)(1-x^*_j(t))u^*_{ji}(t)\beta_i<0$ which is in contradiction with the fact.

To prove (ii), note that at time $T$, we have $\mathbf{p}_i(T)=0$. Combined with the fact that $g_{ij}(\cdot)$ is convex and continuously differentiable, $g_{ij}(0)=0$ and $p_{ii}(t)> 0$, by expression (\ref{ConRule}), we have that $u^*_{ij}(t)=w^o_{ij}$ holds only at $t=T$.

For proof of (iii), from Observation \ref{CutDownSpace}, we know that if $w^o_{ji}=0$, the optimal weight adaptation $u^*_{ji}(t)=0$ for every $t$. Here, $|\mathcal{N}^{in}_{i,o}|=0$ indicates $w^o_{ji}=0$ for all $j\in\mathcal{N}$. Based on (\ref{CosDy}) and (\ref{CoMat}), the dynamics of the costate component $p_{ii}(t)$ can be written as
\begin{equation*}
    \dot{p}_{ii}(t)= \Large\left\{\sum\limits_{j\in\mathcal{N}_{i,o}^{out}}u^*_{ij}(t)\beta_jx^*_j (t)+\sigma_i \Large\right\} p_{ii}(t)-\alpha_i.
\end{equation*}
Note that the first term in the bracket is non-negative and $p_{ii}(T)=0$. Moving backward from $T$, it's obvious that $p_{ii}(t)$ is bounded above by $\alpha_i/\sigma_i$.

Under conditions stated in (iv), the dynamics of the costate component $p_{ii}(t)$ can be written as 
\begin{equation*}
    \dot{p}_{ii}(t)= -\alpha_i+\sum\limits_{j\in\mathcal{N}_{i,o}^{in}}-(1-x_j^*(t))u_{ji}^*(t)\beta_i p_{ij}(t).
\end{equation*}
We have proved in (i) that $p_{ij}(t) \geq 0$ holds for all $i,j\in\mathcal{N},i\neq j$ and for $t\in[0,T]$ which indicates $\dot{p}_{ii}<0$. Thus, $p_{ii}(t)$ is strictly monotonically decreasing over $t$.
\end{proof}

\subsection{Proof of Theorem \ref{SocNE}} \label{ProofSocNE}
\begin{proof}
Assume that $\mathbf{u}_o=(\mathbf{u}_1^o,...,\mathbf{u}_n^o)$ is an open-loop optimal control of the centralized control problem (\ref{OpCon}), and $\mathbf{x}_o=(\mathbf{x}_1^o,...,\mathbf{x}_n^o)$ is the state path under the optimal control. Fix an arbitrary $i\in \mathcal{N}$, and let $\mathbf{u}_i \neq \mathbf{u}_i^o$
be an open-loop strategy for player $i$. Let $\mathbf{x}=(\mathbf{x_1},...,\mathbf{x}_n)$ be the new state trajectory given by (\ref{OpCon}) corresponding to $(\mathbf{u}_i, \mathbf{u}_{-i}^o)$. As $\mathbf{u}^o$ and $\mathbf{x}^o$ are optimal for the optimal control problem (\ref{OpCon}), then
\begin{equation*}
\begin{aligned}
\int\limits_0^T \sum\limits_{i=1}^N f_i(x_i(t)){\color{blue}+}\sum\limits_{j\neq i} \sum\limits_{k\in \mathcal{N}_{j,o}^{out}}g_{jk}(u^o_{jk}(t)-w^o_{jk})\\ 
+ \sum\limits_{k\in \mathcal{N}_{i,o}^{out}}g_{ik}(u_{ik}(t)-w^o_{ik})dt\\
\geq \int\limits_0^T \sum\limits_{i=1}^N f_i(x^o_i(t)){\color{blue}+}\sum\limits_{i=1}^N \sum\limits_{j\in \mathcal{N}_{i,o}^{out}}g_{ij}(u^o_{ij}(t)-w^o_{ij})dt.\\
\end{aligned}
\end{equation*}
Adding to both sides of this this inequality the constant
\begin{equation*}
{\color{blue}-}\int\limits_0^T \sum\limits_{j\neq i} \sum\limits_{k\in \mathcal{N}_{j,o}^{out}}g_{jk}(u^o_{jk}(t)-w^o_{jk})dt,
\end{equation*}
we obtain that $\hat{J}_i (\mathbf{u}_i,\mathbf{u}_{-i}^o) \geq \hat{J}_i(\mathbf{u}^o)$ for all $\mathbf{u}_i \in U_i$. According to the definition of open-loop NE for differential games in (\ref{DefNE}), we know $\mathbf{u}^o$ is also an open-loop NE for the differential game with penalties.

To show the optimal control problem (\ref{OpCon}) shares the same necessary conditions with the new differential game, we again utilize the maximum principle. The Hamiltonian of player $i$ for the new differential game is $\hat{H}_i=H_i+c_i(t)$. We can find that relations (\ref{NecConDy}) (\ref{ConHam}) and (\ref{CosDy}) under the Hamiltonian $\hat{H}_i$ are aligned with relations (\ref{SoOp1}) (\ref{SoOp2}) and (\ref{SoOp3}) where $\lambda(t)=\mathbf{p}_i(t)$ at each $t$ for all $i\in\mathcal{N}$.
\end{proof}

\section{Example}\label{Corr2Examp}

To illustrate Corollary \ref{SocialSpe}, we consider a directed network in Fig. \ref{DAG}. Here, $\mathcal{R}_1 = \{1\}$, $\mathcal{R}_3=\{1,2,3,4\}$, $\mathcal{R}_5=\{1,4,5\}$. The $\Gamma(t)$ associated with this network can be rewritten as an upper triangular block matrix. The upper triangular matrix is denoted by $\Gamma_{\mathcal{R}_i}$ where the first $|\mathcal{R}_i|$ rows and columns of this matrix represent the vertices in $\mathcal{R}_i$ in an ascending order. The last $N-|\mathcal{R}_i|$ rows and columns represent the rest of the vertices in $\mathcal{N}\backslash \mathcal{R}_i$ in an ascending order. For example, the permutation for agent $5$ is $\{1,4,5,2,3\}$. Thus, the dynamics of $\mathbf{p}_5$ under the differential game given in Corollary \ref{SocialSpe} can be written as 
\[
\left[
\begin{array}{c}
\dot{p}_{51}       \\
    \dot{p}_{54}       \\
  \dot{p}_{55} \\\hline
    \dot{p}_{52}  \\
    \dot{p}_{53}
\end{array}
\right]
=\left[\begin{array}{@{}c|c@{}}
  \Gamma_{\mathcal{R}_5}^u &
 \mathbf{0}_{3\times2}
\\ \hline
  \begin{matrix}
  \Gamma_{\mathcal{R}_5}^l
  \end{matrix}
  & \Gamma_{\mathcal{R}_5}^r
\end{array}\right]
\left[
\begin{array}{c}
p_{51}       \\
p_{54}       \\
p_{55} \\\hline
p_{52}  \\
p_{53}
\end{array}
\right]+
\left[
\begin{array}{c}
-f_1'(x_1)       \\
-f_4'(x_4)       \\
-f_5'(x_5) \\\hline
0 \\
0
\end{array}
\right],
\]
where
\begin{equation*}
\begin{aligned}
\Gamma_{\mathcal{R}_5}^u&= \begin{bmatrix}
\sum\limits_{j\in \{2,3\}} u^*_{1j}\beta_jx^*_j+\sigma_1 & 0& 0\\
-(1-x_1^*)u_{14}^*\beta_4 & \sum\limits_{j\in \{3,5\}} u^*_{4j}\beta_j x^*_j+\sigma_4 & 0\\
 0&-(1-x_4^*)u_{45}^*\beta_5& \sigma_5
\end{bmatrix},\\
\Gamma_{\mathcal{R}_5}^l&=\begin{bmatrix}
-(1-x_1^*)u_{12}^*\beta_2 & 0 &0\\
-(1-x_1^*)u_{13}^*\beta_3 & -(1-x_4^*)u_{43}^*\beta_3&0\\
\end{bmatrix},\\
\Gamma_{\mathcal{R}_5}^r &=\begin{bmatrix}
u^*_{23}\beta_3 x^*_3 +\sigma_2 & 0\\
-(1-x_2^*)u_{23}^*\beta_3 &\sigma_3
\end{bmatrix}.
\end{aligned}
\end{equation*}
Thus, if we let $c_i(t)=\sum_{j\in\mathcal{R}_{i,o}\backslash \{i\}} f_j(x_j)$, the dynamics of $p_{ii}$ described by $(\ref{NewCosDy})$ is consistent with the dynamics of the $i$th component of $\lambda$ described by (\ref{SoOp2}). By solving the optimization problem (\ref{SoOp3}), we know that the optimal control problem shares the same control rule (\ref{ConRule}) with the differential game problem. Since $p_{ii} (t) = \lambda_i (t)$ for every $t\in [0,T]$, we can see the statement in Corollary \ref{SocialSpe} holds.


\ifCLASSOPTIONcaptionsoff
  \newpage
\fi

\end{document}